\documentclass[12pt]{amsart}
\usepackage{amssymb,amsmath,amsthm,amssymb,amsfonts,amscd}
\usepackage{graphicx}
\graphicspath{ {images/} }
\usepackage{bm}
\usepackage{color, enumerate}
\usepackage[all]{xy}
\usepackage{bigints}
\usepackage{enumitem}
\usepackage[english]{babel}
\usepackage{dsfont} 
\usepackage{hyperref}
\usepackage{verbatim}
\usepackage[mathscr]{euscript}
\usepackage{enumitem}

\usepackage{soul}

\newcommand{\rr}{\mathbb R}

\newcommand{\calF}{\mathscr F}

\renewcommand{\epsilon}{\varepsilon}

\newcommand{\lra}{\longrightarrow}

\newcommand{\abs}[1]{\lvert#1\rvert}
\newcommand{\norm}[1]{\lVert#1\rVert}
\newcommand{\Lspace}[1]{L^{#1}(\rr)}

\definecolor{UMRed}{rgb}{0.73,0.09,0.19}   

\newcommand{\ds}{\displaystyle}

\DeclareMathOperator*\essinf{ess \ inf} 
\newcommand{\LH}{\operatorname{LH}}

\newcommand{\Fourier}{\calF}

\newcommand{\Schwartz}{\mathcal{S}}

\newcommand{\pinfMr}{p_{\infty, r}^+}
\newcommand{\pinfMl}{p_{\infty, l}^+}
\newcommand{\pinfMstar}{p_{\infty, *}^+}

\newcommand{\pinfmr}{p_{\infty, r}^-}
\newcommand{\pinfml}{p_{\infty, l}^-}
\newcommand{\pinfmstar}{p_{\infty, *}^-}

\newcommand{\qinfMr}{q_{\infty, r}^+}
\newcommand{\qinfMl}{q_{\infty, l}^+}
\newcommand{\qinfMstar}{q_{\infty, *}^+}

\newcommand{\qinfml}{q_{\infty, l}^-}
\newcommand{\qinfmr}{q_{\infty, r}^-}
\newcommand{\qinfmstar}{q_{\infty, *}^-}
\newcommand{\pinfmrtheta}{p_{\infty, \theta}^-}

\newcommand{\qinfMrtheta}{q_{\infty, \theta}^+}

\theoremstyle{plain}
\newtheorem{theorem}{Theorem}[section]
\newtheorem{corollary}[theorem]{Corollary}
\newtheorem{lemma}[theorem]{Lemma}
\newtheorem{proposition}[theorem]{Proposition}

\theoremstyle{definition}

\newtheorem{remark}[theorem]{Remark}

\usepackage[
    backend=biber,
    style=numeric
]{biblatex}

\renewbibmacro{in:}{} 
\usepackage{etoolbox}
\patchcmd{\thmhead}{(#3)}{#3}{}{}

\DeclareMathAlphabet{\mathpzc}{OT1}{pzc}{m}{it}

\newcommand{\N}{\mathbb{N}}

\def\supp{{\rm supp}}

\def\N{{\mathbb{N}}}

\def\0{{\rm \bf{0}}}

\def\B{{\mathcal{B}}}

\DeclareMathOperator*{\esssup}{ess\,sup}

\def\B2star{\overline{B}_X^{w(X^{\ast\ast},X^{\ast})}}

\title{Fourier inequalities in variable Lebesgue spaces}\thanks{This work was partially supported by CONICET PIP  11220200102366CO, 
CONICET PIP 11220200101609CO, 
SIIP 80020240100196UN 
UNCOMA PIN I 04/B251 
and UNCuyo 06/80020240100069UN 
}

\keywords{}
\subjclass[2010] {}

\date{}

\begin{document}
\baselineskip=.65cm

\author[Cardeccia]{Rodrigo Cardeccia}
\address[Cardeccia]{Dpto. de Matemática Instituto Balseiro, CNEA-Universidad Nacional de Cuyo, CONICET, San Carlos de Bariloche, Argentina.}
\email{\texttt{rodrigo.cardeccia@ib.edu.ar}}

\author[Caruso]{Mat\'ias I. Caruso}
\address[Caruso]{Dpto. de Matemática Instituto Balseiro, CNEA-Universidad Nacional de Cuyo, CONICET, San Carlos de Bariloche, Argentina.}
\email{\texttt{matias.caruso@ib.edu.ar}}

\author[Mazzitelli]{Martin Mazzitelli}
\address[Mazzitelli]{Dpto. de Matemática Instituto Balseiro, CNEA-Universidad Nacional de Cuyo, CONICET, San Carlos de Bariloche, Argentina.}
\email{\texttt{martin.mazzitelli@ib.edu.ar}}

\author[Rodr\'{i}guez]{Jorge Tom\'as Rodr\'{i}guez}
\address[Rodr\'{i}guez]{NuCoMPA, Facultad de Cs. Exactas, Universidad Nacional del Centro de la Provincia de Buenos Aires, CONICET, Tandil, Argentina.
    }
\email{\texttt{jtrodriguez@nucompa.exa.unicen.edu.ar}}

\begin{abstract}
We study the boundedness of the Fourier transform on variable Lebesgue spaces. We obtain necessary conditions and, independently, sufficient conditions on the exponents $p(\cdot)$ and $q(\cdot)$ under which the inequality $$\|\hat{f}\|_{q(\cdot)}\leq C\|f\|_{p(\cdot)}$$
holds for some constant $C>0$ and all $f\in L^{p(\cdot)}(\mathbb{R})$. As a byproduct, we improve some recent results of Saucedo and Tikhonov in \cite{ar:saucedo_tikhonov:2025:note_on_fourier_inequalities}. Moreover, when $p(x)\to 1$ and $q(x)\to +\infty$ as $|x|\to +\infty$, we characterize the exponents for which the Fourier transform is bounded.
\end{abstract}

\maketitle

\section{Introduction}

Given a Schwartz function $f\colon \mathbb{R}\to \mathbb{C}$, its Fourier transform is defined by
\begin{equation*}\label{TF}
\hat{f}(\xi) := \int_{\mathbb{R}} f(x)e^{-2\pi i x \xi}\, dx, \quad \xi\in \mathbb{R}.
\end{equation*}
This definition extends to functions in $L^p(\mathbb{R})$ with $1\leq p\leq 2$, where the classical Hausdorff--Young inequality
\begin{equation}\label{HY}
\|\hat{f}\|_{p'}\leq \|f\|_p
\end{equation}
holds for every $f \in L^p(\rr)$ and $\tfrac{1}{p}+\tfrac{1}{p'}=1$.
In the setting of variable exponent Lebesgue spaces, one might naturally conjecture that   
\begin{equation}\label{HYfail}
\|\hat{f}\|_{p'(\cdot)}\leq \|f\|_{p(\cdot)}, 
\end{equation}
whenever $1\leq p(x)\leq 2$ and $\tfrac{1}{p(x)}+\tfrac{1}{p'(x)}=1$ for all $x\in \mathbb{R}$. However, as shown in \cite[Section~5.6.10]{bo:cruz-uribe_fiorenza:variable_lebesgue_spaces}, taking $f(x) := |x|^{-2/3}$ and a smooth exponent $p(\cdot)\colon \mathbb{R}\to [1,\infty]$ such that $p(x)=5/4$ on $(-1,1)$ and $p(x)=2$ on $\mathbb{R}\setminus (-2,2)$, one has $f\in L^{p(\cdot)}(\mathbb{R})$ but $\hat{f}\notin L^{p'(\cdot)}(\mathbb{R})$. This simple example reveals that, if a generalization of the Hausdorff--Young inequality exists, it must involve some \emph{extra ingredient} not accounted for in \eqref{HYfail}. Motivated by some weighted Fourier inequalities in \cite{benedetto1987fourier}, Cruz--Uribe and Fiorenza posed the question of whether the inequality
\[
\|\hat{f}\|_{p'\left( \frac{1}{x} \right)}\leq C \|f\|_{p(x)}
\]
holds for some constant $C>0$, where $p(\cdot)$ is an even, non-decreasing exponent on $(0,+\infty)$ satisfying $1\leq p(\cdot) \leq 2$ (see \cite[Question~5.61]{bo:cruz-uribe_fiorenza:variable_lebesgue_spaces}). This question was recently answered in the negative by Saucedo and Tikhonov in \cite{ar:saucedo_tikhonov:2025:note_on_fourier_inequalities}. They proved that the above inequality holds only in the constant exponent case under the additional assumptions that $p(\cdot)$ is continuous, has a limit at infinity, and satisfies a certain regularity condition. 

Our main objective is to undertake a systematic study of the validity of Hausdorff--Young type inequalities in variable exponent Lebesgue spaces. In Theorem~\ref{thm: necessary and sufficient thm} we show that, in a certain sense, the behavior of $p(\cdot), q(\cdot)\in \mathcal{P}(\rr)$ at infinity is crucial for the boundedness of $\Fourier\colon L^{p(\cdot)}(\rr)\to L^{q(\cdot)}(\rr)$. On the one hand, the embeddings $L^{p(\cdot)}(\mathbb{R})\hookrightarrow L^{p_-}(\mathbb{R})$ and $L^{(p_-)'}(\mathbb{R})\hookrightarrow L^{q(\cdot)}(\mathbb{R})$ together with the conditions $p_-\leq 2$ and $q_+\leq (p_-)'$ are sufficient for this boundedness (we refer the reader to the precise definitions and notation given below). On the other hand, the boundedness of $\Fourier\colon L^{p(\cdot)}(\rr)\to L^{q(\cdot)}(\rr)$ implies that $(p_-)'=q_+\geq 2$, and that the \emph{essential limit inferior} (respectively, \emph{superior}) of $p(\cdot)$ (respectively, $q(\cdot)$) coincides with $p_-$ (respectively, $q_+$). We go further in Theorem~\ref{Thm: equivalencia q_+=infinito}, where we obtain the desired characterization in the particular case
$$
\lim_{|x| \to +\infty} p(x) = 1 \qquad \text{and} \qquad \lim_{|x| \to +\infty} q(x) = +\infty.
$$
We also study \emph{how large} the sets $\{x\in \rr : p(x) > 2+\varepsilon\}$ and $\{x\in \rr : q(x) < 2-\varepsilon\}$ can be when $\Fourier\colon L^{p(\cdot)}(\rr)\to L^{q(\cdot)}(\rr)$ is bounded.

\subsection{Preliminaries}
Variable exponent Lebesgue spaces were introduced by Orlicz in the early 30's and systematically studied by Nakano \cite{bo:nakano:modulared_semi-ordered_linear_spaces,bo:nakano:topology_of_linear_topological_spaces} in the 50's.  
In recent years, these spaces have attracted significant attention due to their applications in the modeling of \emph{electrorheological fluids} and in \emph{image processing} (see \cite{bo:diening_harjulehto_hasto_ruzicka:Lebesgue_and_Sobolev_spaces_with_variable_exponents} and the references therein). This has motivated the study of various properties of these spaces, in particular those already known in the setting of classical Lebesgue spaces, where the exponent $p$ is constant. At the same time, many authors have devoted efforts to developing harmonic analysis in the variable exponent setting: in this direction, we may mention \cite{ar:cruz-uribe_diening_fiorenza:2009:a_new_proof_of_the_boundedness_of_maximal_operators_on_variable_Lebesgue_spaces, ar:cruz-uribe_fiorenza_martell_perez:2006:the_boundedness_of_classical_operators_on_variable_Lp_spaces, ar:cruz-uribe_fiorenza_neugebauer:2003:the_maximal_function_on_variable_Lp_spaces, ar:diening:2004:maximal_function_on_generalized_Lebesgue_spaces_Lp, ar:nekvinda:2004:Hardy-Littlewood-maximal_operator_on_LpRn}, where the boundedness of the Hardy--Littlewood maximal operator and other classical operators of harmonic analysis on $L^{p(\cdot)}(\mathbb{R}^n)$ is investigated.

We start by fixing the notation and recalling the definitions that are essential in what follows.
Given a $\sigma$-finite complete measure space $(\Omega, \Sigma, \mu)$, we denote by $\mathcal{P}(\Omega,\mu)$ the set of all $\mu$-measurable functions (exponents) $p(\cdot)\colon \Omega \to [1,+\infty]$. As usual,
\begin{equation}\label{def_p+p-}
p_{_-} := \essinf_{x\in \Omega}p(x) \qquad \text{and} \qquad p_{_+} := \esssup_{x\in \Omega}p(x),
\end{equation}
while $\mathcal{P}_b(\Omega,\mu)=\{p\in \mathcal{P}(\Omega,\mu):\,\, 1<p_-\leq p_+<+\infty\}$ denotes the set of all bounded exponents. Given an exponent function $p(\cdot)$, let $\rho_{p(\cdot)}$ be the \emph{modular functional} associated to $p(\cdot)$ given by
$$
\rho_{p(\cdot)}(f) := \int_{\Omega\setminus \Omega_\infty}|f(x)|^{p(x)} \ d\mu(x) + \|f\|_{L^\infty \left( \Omega_\infty \right)},
$$
where $\Omega_\infty := \{x\in \Omega:\,\, p(x)=+\infty\}$. We denote by $L^{p(\cdot)}(\Omega, \mu)$ the Banach space of $\mu$-measurable functions $f$ such that $\rho_{p(\cdot)}(f/\lambda)<\infty$ for some $\lambda>0$, endowed with the Luxemburg norm
$$
\|f\|_{L^{p(\cdot)}(\Omega, \mu)} := \inf\{\lambda > 0 \mid \rho_{L^{p(\cdot)}(\Omega)}(f/\lambda)\leq 1\}.
$$
Since we will work on $\rr$ equipped with the Lebesgue measure, we shall simply write $\mathcal P(\mathbb R)$, $\mathcal P_b(\mathbb R)$ and $L^{p(\cdot)}(\rr)$.
If $E\subseteq \rr$ is measurable, we denote by $L^{p(\cdot)}(E)$ the corresponding variable exponent Lebesgue space of measurable functions on $E$.

To study the behavior of exponent functions $p(\cdot)\in\mathcal P(\mathbb R)$ at infinity, in what follows we will consider the \emph{essential limit inferior} and \emph{essential limit superior} at $+\infty$ and $-\infty$ defined by
\[
\pinfmr := \lim_{M \to +\infty} \essinf_{x \ge M} p(x), \quad \pinfMr := \lim_{M \to +\infty} \esssup_{x \ge M} p(x)
\]
and
\[
\pinfml := \lim_{M \to -\infty} \essinf_{x \le M} p(x), \quad \pinfMl := \lim_{M \to -\infty} \esssup_{x \le M} p(x).
\]
Finally, we will denote by $C_0(\rr)$ the space of continuous functions vanishing at infinity, by $C^\infty_c(\rr)$ the space of infinitely differentiable functions with compact support, and by $\Schwartz(\rr)$ the Schwartz space on $\rr$. It is clear that $C^\infty_c(\rr)\subseteq \Schwartz(\rr)$ and that both spaces are dense in $L^p(\rr)$ when $1\leq p<\infty$. Moreover, if $p(\cdot)\in \mathcal{P}_b(\rr)$ then $C^\infty_c(\rr)$ and $\Schwartz(\rr)$ are dense in $L^{p(\cdot)}(\rr)$ (see \cite[Theorem~3.4.12]{bo:diening_harjulehto_hasto_ruzicka:Lebesgue_and_Sobolev_spaces_with_variable_exponents}). 

Occasionally, we will use the notation $\lesssim$ to denote an inequality up to a multiplicative constant. 

\subsection{Main results}
The starting point of our analysis is one of the main results of this paper.

\begin{theorem}\label{thm: necessary and sufficient thm}
Let $p(\cdot), q(\cdot)\in \mathcal{P}(\mathbb{R})$. Consider the following statements.
\begin{enumerate}
\item The conditions $p_-\leq 2$, $q_+\leq (p_-)'$ and $1\in L^{r(\cdot)}(\mathbb{R})\cap L^{s(\cdot)}(\mathbb{R})$ hold, where $r(\cdot), s(\cdot)$ are the defect exponents defined by
$$
\frac{1}{r(\cdot)}=\frac{1}{p_-}-\frac{1}{p(\cdot)} \quad \text{and} \quad \frac{1}{s(\cdot)}=\frac{1}{q(\cdot)}-\frac{1}{(p_-)'}.
$$
\item The Fourier transform $\Fourier : L^{p(\cdot)}(\mathbb{R})\to L^{q(\cdot)}(\mathbb{R})$ is bounded.
\item The conditions
$$
\pinfmr=\pinfml = p_-\leq 2, \quad \qinfMr=\qinfMl =q_+ \quad  \text{and} \quad (p_-)'=q_+
$$
hold.
\end{enumerate}
Then, the implications $(i)\Rightarrow (ii)\Rightarrow (iii)$ hold.
\end{theorem}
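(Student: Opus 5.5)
For $(i)\Rightarrow(ii)$ we factor $\Fourier$ through classical Lebesgue spaces. Write $p_0:=p_-\in[1,2]$. We use the standard embedding fact for variable Lebesgue spaces. If $\frac{1}{a(\cdot)}=\frac{1}{b(\cdot)}+\frac{1}{c(\cdot)}$ with $1\in L^{c(\cdot)}(\rr)$, then the generalized Hölder inequality gives $\|f\|_{a(\cdot)}\lesssim \|1\|_{c(\cdot)}\|f\|_{b(\cdot)}$, so $L^{b(\cdot)}(\rr)\hookrightarrow L^{a(\cdot)}(\rr)$. (We would check this version allows infinite exponents; it is in the Cruz-Uribe--Fiorenza book.)

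Take $a=p_0$, $b=p(\cdot)$, $c=r(\cdot)$. Then $1\in L^{r(\cdot)}$ gives $L^{p(\cdot)}\hookrightarrow L^{p_0}$. Next, Hausdorff--Young, valid since $p_0\le 2$, maps $L^{p_0}$ into $L^{p_0'}$. Finally take $a=q(\cdot)$, $b=p_0'$, $c=s(\cdot)$. Here $s(\cdot)\ge 1$ is well defined exactly because $q_+\le p_0'$, and $1\in L^{s(\cdot)}$ gives $L^{p_0'}\hookrightarrow L^{q(\cdot)}$.

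One technicality is what $\Fourier$ means on $L^{p(\cdot)}$ when $p(\cdot)$ is unbounded, since Schwartz functions need not be dense there. We define $\Fourier f$ via the $L^{p_0}$ Fourier transform, which makes sense through the first embedding.

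For $(ii)\Rightarrow(iii)$ we argue by contradiction using dilation/modulation-type test functions concentrated near infinity. The main steps are as follows.

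\textbf{Step 1: $q_+\ge 2$.} Take a Gaussian $g_\lambda(x)=e^{-\pi x^2/\lambda^2}$, with $\widehat{g_\lambda}$ a Gaussian of width $1/\lambda$. Pick a set of positive measure where $q<2-\varepsilon$, and consider $\hat f$ concentrated there. It is simpler to use functions $f$ whose Fourier transform is an indicator-like bump. By duality-type scaling, $\|\hat f\|_{q}$ for a bump of width $\delta$ on a set where $q<2$ grows relative to $\|f\|_{p}$, contradicting boundedness. We expect to instead use $f=\sum$ of many translates/modulates with random signs (Khintchine). That forces $q_+\ge 2$ and $p_-\le 2$, mimicking the constant-exponent proof that $\Fourier:L^p\to L^q$ bounded implies $q=p'\ge 2$.

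\textbf{Step 2: behaviour at infinity.} Suppose $\pinfmr>p_-$. Use the translation and modulation invariance of $|\hat f|$: $\widehat{f(\cdot-t)}=e^{-2\pi i t\xi}\hat f$ has the same modulus, while $f(\cdot-t)$ lives where $p$ is near $\pinfmr$ as $t\to+\infty$. Also use a region where $p\approx p_-$, with $N$ well-separated translates $\sum_k \epsilon_k f(\cdot-t_k)$. On the input side, the modular scales like $N$ so the norm is about $N^{1/\tilde p}$, with $\tilde p$ the local value of $p$. On the output side, averaging over signs, $|\hat F|\approx N^{1/2}|\hat f|$ pointwise on average. Comparing with Step 1 shows that only the values of $p$ near infinity contribute to large sums, which pins down $\pinfmr=\pinfml=p_-$. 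Similarly, modulating $f$ by $e^{2\pi i \xi_k x}$ moves $\hat f$ to where $q$ is near its limsup at $\pm\infty$, giving $\qinfMr=\qinfMl=q_+$.

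\textbf{Step 3: $(p_-)'=q_+$.} Dilate: $f_\lambda(x)=f(\lambda x)$ placed far out with spread mass. For large spread (a sum of $N$ separated pieces, or wide dilations) the local exponents are essentially $p_-$ and $q_+$. The constant-exponent scaling argument then gives $1/p_-+1/q_+=1$, using a Gaussian to realise equality in both directions.

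We expect the main obstacle to be controlling the Luxemburg norms of sums of separated pieces when exponents are merely measurable, with only essential limits available. We would select pieces supported on sets where $p\le p_-+\varepsilon$ (which exist arbitrarily far out) and estimate modulars piece by piece.
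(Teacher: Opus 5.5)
Your proof of $(i)\Rightarrow(ii)$ is the same as the paper's. The condition $1\in L^{r(\cdot)}(\rr)\cap L^{s(\cdot)}(\rr)$ gives the embeddings $L^{p(\cdot)}(\rr)\hookrightarrow L^{p_-}(\rr)$ and $L^{(p_-)'}(\rr)\hookrightarrow L^{q(\cdot)}(\rr)$, and you compose them with Hausdorff--Young. Your Khintchine idea also works, but only for one thing. Random-sign sums of $N$ disjoint translates of a fixed bump, placed where $p>2+\varepsilon$ and tested in $L^1(K)$ on a compact $K$ via $L^{q(\cdot)}(K)\hookrightarrow L^1(K)$, show $\pinfmr,\pinfml\le 2$. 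This is a valid alternative to Proposition~\ref{prop: lim menor a 2}. The real content of $(ii)\Rightarrow(iii)$, namely $\pinfmr=\pinfml=p_-$, $q_+=(p_-)'$ and $\qinfMr=\qinfMl=q_+$, is not proved.

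\textbf{Step 2 has no mechanism.} The output of your sums has size $N^{1/2}$ regardless of $q$. The input has size $N^{1/\tilde p}$, where $\tilde p$ is the value of $p$ where the $N$ pieces sit. Since $N\to\infty$ separated unit pieces must escape to infinity, these sums cannot detect a bounded set where $p$ is small. For example, take $p=1$ on $[0,1]$, $p=2$ elsewhere, and $q\equiv 2$. The operator is unbounded (use any $f\in L^1[0,1]\setminus L^2$), yet your sums only compare $N^{1/2}$ with $N^{1/2}$. So the claim that this ``pins down $\pinfmr=\pinfml=p_-$'' is unsupported.

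\textbf{Step 3 is circular and technically off.} Saying that far out the relevant exponents are ``essentially $p_-$ and $q_+$'' is exactly what (iii) asserts. Beyond that:
\begin{itemize}
\item A wide dilation placed far out has a Fourier transform concentrated near a point. It therefore probes local values of $q$, not $q$ near infinity.
\item A concentrated dilation needs an upper bound for $\|f_\lambda\|_{p(\cdot)}$ near a point where $p$ is small. For merely measurable $p$ this fails when $\{p\le p_-+\delta\}$ contains no interval. A Gaussian, having full support, can never do this.
\end{itemize}

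\textbf{The missing idea.} You need to exchange the behaviour of $p$ at infinity with the local behaviour of $q$, and dually local values of $p$ with $q$ at infinity. For measurable exponents this must be made robust via Lebesgue density points. The paper first translates to infinity (Proposition~\ref{prop: inclusion transformada de Fourier}) to get $\Fourier$ bounded on the closure of $C^\infty_c$ in $L^{\pinfmr}\cap L^{\pinfMr}$. It then tests $f=\check g$ with $g=|x|^{-\alpha}\chi_{[-1,1]}$ and $1/r<\alpha<1-1/\pinfmr$, centred at a density point of $\{q\ge r\}$, which gives $\pinfmr\le (q')_-$. The dual statement gives $(q')_-\le (q')^-_{\infty,r}\le p_-$ and $(\qinfMr)'\le p_-$. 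The chain $p_-\le\pinfmr\le(q')_-\le p_-$ then closes everything.

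\textbf{Your scaling idea can be repaired along these lines}, without duality:
\begin{itemize}
\item Take $e^{2\pi i\xi_0 x}\varphi(\lambda(x-h))$, with $h$ so large that $p\ge\pinfmr-\varepsilon$ on its support and $\xi_0$ a density point of $\{q\ge b\}$. Letting $\lambda\to 0$ gives $q_+\le(\pinfmr)'$.
\item Take $e^{2\pi i h x}\chi_{E\cap I}$ with $E=\{p\le a\}$ and $I$ a $\delta$-interval around a density point of $E$. Its transform is at least $|E\cap I|/2$ on an interval of length about $1/\delta$ around $h$, where $h$ is chosen so that $q\le\qinfMr+\varepsilon$ there. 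Letting $\delta\to0$ gives $\qinfMr\ge a'$.
\end{itemize}
Together with the trivial inequalities these give $(p_-)'\le\qinfMr\le q_+\le(\pinfmr)'\le(p_-)'$. None of these estimates, which carry the whole proof, appear in your proposal.
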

The proof of this theorem relies on  Proposition~\ref{prop: inclusion transformada de Fourier}, which essentially relates the boundedness of $\Fourier \colon L^{p(\cdot)}(\mathbb{R})\to L^{q(\cdot)}(\mathbb{R})$ with the boundedness of $\Fourier : L^{\pinfmr}(\rr)\cap L^{\pinfMr} (\rr)\to L^{q(\cdot)}(\rr)$ and $\Fourier : L^{\pinfml}(\rr)\cap L^{\pinfMl}(\rr) \to L^{q(\cdot)}(\rr)$. It also provides a duality-type principle which, in the case of bounded exponents, asserts that the boundedness of the Fourier transform from $L^{p(\cdot)}(\rr)$ into $L^{q(\cdot)}(\rr)$ is equivalent to that from $L^{q'(\cdot)}(\rr)$ into $L^{p'(\cdot)}(\rr)$ (see also Remark~\ref{rmk: dual de la transf}).

It is worth mentioning that the implication $(ii)\Rightarrow (iii)$ in Theorem~\ref{thm: necessary and sufficient thm} is a refinement of the following recent result of Saucedo and Tikhonov. Moreover, the techniques used in the present work build upon ideas that are closely related to those developed in their approach.
\begin{theorem}{(\cite[Theorem~1.1]{ar:saucedo_tikhonov:2025:note_on_fourier_inequalities})}
    Let $p(\cdot), q(\cdot)\in \mathcal{P}(\rr)$ be such that $\Fourier\colon L^{p(\cdot)}(\rr)\to L^{q(\cdot)}(\rr)$ is bounded. Assume that $p(\cdot)$ is continuous, even, non-decreasing on $(0,+\infty)$, and that $\lim_{x\to+\infty}p(x)=p_\diamond \leq 2$. Then $p(x)\leq p_\diamond\leq q'(x)$ for all $x\in \rr$.
\end{theorem}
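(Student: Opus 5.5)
The plan is to deduce this statement directly from the implication $(ii)\Rightarrow(iii)$ of Theorem~\ref{thm: necessary and sufficient thm}. Beyond that implication, we only need the structural hypotheses on $p(\cdot)$ to identify the essential limits at infinity with $p_\diamond$.

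\emph{Identify the limits of $p$.} Since $p(\cdot)$ is even and $\lim_{x\to+\infty}p(x)=p_\diamond$, we also have $\lim_{x\to-\infty}p(x)=p_\diamond$. Because a genuine limit exists, the essential limit inferior and superior agree with it:
\[
\pinfmr=\pinfml=\pinfMr=\pinfMl=p_\diamond .
\]
Moreover, $p(\cdot)$ is non-decreasing on $(0,+\infty)$, so $p(x)\le p_\diamond$ for every $x>0$. By evenness the same holds for $x<0$, and at $x=0$ it follows from continuity. This already gives the first inequality $p(x)\le p_\diamond$ for all $x\in\rr$; it uses only the monotonicity and not the boundedness of $\Fourier$.

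\emph{Apply Theorem~\ref{thm: necessary and sufficient thm}.} Since $\Fourier\colon L^{p(\cdot)}(\rr)\to L^{q(\cdot)}(\rr)$ is bounded, statement (iii) holds. It gives $p_-=\pinfmr=p_\diamond$ and $q_+=(p_-)'=(p_\diamond)'$. Hence $q(x)\le q_+=(p_\diamond)'$ for a.e.\ $x$. Passing to conjugate exponents, which reverses inequalities, this reads $q'(x)\ge p_\diamond$ almost everywhere, which is the second inequality. As a byproduct, $p_-=p_\diamond$ combined with $p\le p_\diamond$ and continuity forces $p(\cdot)\equiv p_\diamond$. So the hypothesis actually collapses to the constant-exponent case, consistent with the remark that Theorem~\ref{thm: necessary and sufficient thm} refines this result.

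\emph{Main subtlety.} The only delicate point is the phrase ``for all $x$'' in the conclusion $p_\diamond\le q'(x)$. Since $q(\cdot)$ is merely measurable and $L^{q(\cdot)}$ does not see null sets, the inequality can only hold almost everywhere (or everywhere after modifying $q(\cdot)$ on a null set, which leaves $L^{q(\cdot)}(\rr)$ unchanged). Under that standard interpretation, the argument above is complete. All the analytic work is contained in the already-stated implication $(ii)\Rightarrow(iii)$, whose proof (via Proposition~\ref{prop: inclusion transformada de Fourier}) is where the real obstacle lies.
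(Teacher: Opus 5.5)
Your proposal is correct and is the route the paper takes. The paper gives no separate proof of this cited result; it presents it as a consequence of the implication $(ii)\Rightarrow(iii)$ in Theorem~\ref{thm: necessary and sufficient thm}, which is exactly how you obtain $p_-=p_\diamond$ and $q_+=(p_\diamond)'$. Your remarks are also accurate: the inequality $p_\diamond\le q'(x)$ can only hold almost everywhere, and $p(\cdot)\equiv p_\diamond$, in line with the paper's comment that only the constant-exponent case survives.
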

\begin{remark}
    It is easy to see that conditions $(i)$ and $(iii)$ in Theorem~\ref{thm: necessary and sufficient thm} are not equivalent. Take, for instance,
    \[
	p(x) :=
	\begin{cases}
		1 &\text{if }  x\in \bigcup_{n=1}^\infty I_n\\
		2 &\text{otherwise},
	\end{cases}
\]
where 
$$
I_n=(n, n+2^{-n}) \cup (-n, -n + 2^{-n}),
$$
and let $q(\cdot)=p'(\cdot)$. Then $p(\cdot)$ and $q(\cdot)$ satisfy $(iii)$, but not $(i)$. Now, it is natural to ask if either $(i)$ or $(iii)$ is equivalent to the boundedness of the Fourier transform. Regarding this question, see Theorem~\ref{Thm: equivalencia q_+=infinito} and Problem~\ref{problem: equivalence} in Section~\ref{problems}. 
\end{remark}
The proof of Theorem~\ref{thm: necessary and sufficient thm}, as well as those of the auxiliary results, is given in Section~\ref{sec necessary and sufficient conditions}. In that section, we also show that the relations obtained for the exponents in Theorem~\ref{thm: necessary and sufficient thm} are, in a certain sense, optimal. More precisely, we prove that there are no further inequalities involving $(p_+)'$, $(\pinfMr)'$, $(\pinfMl)'$, $q_-$, $\qinfmr$ or $\qinfml$, beyond the trivial ones.

In Section~\ref{Seccion caracteristicas}, we carry out a deeper analysis of the conditions for the boundedness of the Fourier transform. We analyze the behavior of the norms of characteristic functions of large intervals.
This approach allows us to sharpen the analysis of the case $q(x)\to+\infty$ and $p(x)\to1$ by proving that condition $(i)$ in Theorem~\ref{thm: necessary and sufficient thm} is equivalent to the boundedness of $\Fourier\colon L^{p(\cdot)}(\rr)\to L^{q(\cdot)}(\rr)$. Concretely, we have the following result.

\begin{theorem}\label{Thm: equivalencia q_+=infinito}
Let $p(\cdot),q(\cdot)\in \mathcal{P}(\rr)$ be such that
\[
\lim_{|x| \to +\infty} p(x)  = 1 \qquad \text{and} \qquad \lim_{|x| \to +\infty} q(x) = +\infty.
\]
Then, $\Fourier: L^{p(\cdot)}(\rr)\to L^{q(\cdot)}(\rr)$ is bounded if and only if $1\in L^{p'(\cdot)}(\rr)\cap L^{q(\cdot)}(\rr)$.
\end{theorem}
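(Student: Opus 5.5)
Sufficiency will follow from Theorem~\ref{thm: necessary and sufficient thm}. Necessity will come from testing the Fourier transform on modulated approximate identities that are concentrated far out, where $p(\cdot)$ (and, after a duality step, $q'(\cdot)$) is close to $1$.

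\emph{Sufficiency.} Since $p(x)\to 1$ as $|x|\to+\infty$ and $p\ge 1$, we have $p_-=1$, so $(p_-)'=+\infty$. Hence the conditions $p_-\le 2$ and $q_+\le (p_-)'$ hold trivially. The defect exponents of Theorem~\ref{thm: necessary and sufficient thm} become $1/r(\cdot)=1-1/p(\cdot)=1/p'(\cdot)$ and $1/s(\cdot)=1/q(\cdot)$, that is, $r=p'$ and $s=q$. So condition (i) there reads exactly $1\in L^{p'(\cdot)}(\rr)\cap L^{q(\cdot)}(\rr)$, and (i)$\Rightarrow$(ii) gives boundedness.

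\emph{Necessity, $1\in L^{q(\cdot)}$.} Fix $\varphi\in C_c^\infty(\rr)$ supported in $[-1,1]$ with $\int\varphi=1$. Choose $c>0$ with $|\hat\varphi|\ge 1/2$ on $[-c,c]$. For small $\varepsilon>0$, pick $a=a(\varepsilon)$ so large that $p(x)-1\le 1/\log(1/\varepsilon)$ on $[a-\varepsilon,a+\varepsilon]$; this is possible because $p\to 1$. Set $f_\varepsilon(x)=\varepsilon^{-1}\varphi((x-a)/\varepsilon)$. Then
\[
\rho_{p(\cdot)}(f_\varepsilon)\le \|\varphi\|_\infty^{1+\delta}\,\varepsilon^{1-p_{\max}}\cdot 2 \lesssim e,
\]
where $p_{\max}$ is the supremum of $p$ on the support and $\delta=p_{\max}-1$. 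Thus $\|f_\varepsilon\|_{p(\cdot)}\le C$ uniformly in $\varepsilon$.

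On the other side, $|\hat f_\varepsilon(\xi)|=|\hat\varphi(\varepsilon\xi)|\ge 1/2$ on $[-c/\varepsilon,c/\varepsilon]$. By the lattice property of the norm, boundedness gives $\|\chi_{[-c/\varepsilon,c/\varepsilon]}\|_{q(\cdot)}\le 2C\|\mathcal F\|$ for all small $\varepsilon$. Letting $\varepsilon\to0$, the Fatou property (monotone convergence of the Luxemburg norm) yields $1\in L^{q(\cdot)}(\rr)$.

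\emph{Necessity, $1\in L^{p'(\cdot)}$.} I would argue by duality. Take $g\in C_c^\infty(\rr)$ and the norming (associate-norm) formula
\[
\|\hat g\|_{p'(\cdot)}\approx\sup\Big|\int \hat g\, f\Big|,
\]
where the supremum runs over simple functions $f$ with bounded support and $\|f\|_{p(\cdot)}\le1$. This class is norming by the Fatou property, and such $f$ lie in $L^1$, so Fubini gives $\int\hat g f=\int g\hat f$. Then $\bigl|\int g \hat f\bigr|\le 2\|g\|_{q'(\cdot)}\|\hat f\|_{q(\cdot)}$ by Hölder. This gives
\[
\|\hat g\|_{p'(\cdot)}\lesssim \|\mathcal F\|\,\|g\|_{q'(\cdot)},
\]
with no boundedness assumption on the exponents. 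Since $q'(x)\to1$ at infinity, the same construction as above, with $q'$ in place of $p$ and $g_\varepsilon=\varepsilon^{-1}\varphi((x-a)/\varepsilon)$, gives $\|g_\varepsilon\|_{q'(\cdot)}\le C$ and $|\hat g_\varepsilon|\ge1/2$ on $[-c/\varepsilon,c/\varepsilon]$. Hence $\|\chi_{[-c/\varepsilon,c/\varepsilon]}\|_{p'(\cdot)}$ is uniformly bounded, and Fatou again gives $1\in L^{p'(\cdot)}(\rr)$.

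The main obstacle I expect is this duality step. Proposition~\ref{prop: inclusion transformada de Fourier} may only provide it for bounded exponents, whereas here $p'$ and $q$ are unbounded. So the restriction to simple compactly supported test functions, and the resulting use of Fubini, has to be justified by hand as sketched. The uniform modular bound in the concentration step is routine once $a(\varepsilon)$ is chosen with $p-1\lesssim 1/\log(1/\varepsilon)$ on the support.
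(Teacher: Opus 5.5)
Your proof is correct and is essentially the paper's argument. Sufficiency comes from $(i)\Rightarrow(ii)$ of Theorem~\ref{thm: necessary and sufficient thm} with $r=p'$, $s=q$; necessity comes from testing on $L^1$-normalized functions whose transforms are bounded below on large intervals, then duality for $p'(\cdot)$ and the Fatou property. The only difference is that you build the translation to where $p\approx 1$ (with $p-1\le 1/\log(1/\varepsilon)$ on the support) directly into the test functions. The paper instead first obtains $\Fourier\colon L^1(\rr)\to L^{q(\cdot)}(\rr)$ and $\Fourier\colon L^1(\rr)\to L^{p'(\cdot)}(\rr)$ from Proposition~\ref{prop: inclusion transformada de Fourier} and Corollary~\ref{cor:Fourier bounded from p_inf to p var}, and then tests on $\chi_{[-t,t]}$ in Lemma~\ref{lemma: condicion necesaria caracteristicas version limite}. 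The duality part of that proposition is already proved there for arbitrary (possibly unbounded) exponents, so the obstacle you anticipated does not actually arise.
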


Finally, in Section \ref{Seccion lagunares} we study the geometric structure of the sets $\{x\in \rr: p(x)>2+\varepsilon\}$ and $\{x\in \rr: q(x)>2-\varepsilon\}$. In Theorem \ref{thm:lagunares}, we show that these sets cannot contain infinitely many intervals of fixed length. The proof of this result relies on the equivalence of the $L^p$ norms of the so-called \emph{Fourier lacunary series}.

\section{Conditions on the exponents}\label{sec necessary and sufficient conditions}
In this section, we prove Theorem~\ref{thm: necessary and sufficient thm}, which provides necessary and, independently, sufficient conditions on the variable exponents $p(\cdot)$ and $q(\cdot)$ for the boundedness of the Fourier transform between variable Lebesgue spaces.

Before turning to the proof of the theorem, we establish two preliminary results. Under the assumption that $\Fourier : L^{p(\cdot)}(\rr) \to L^{q(\cdot)}(\rr)$ is bounded, the first one asserts that $\pinfmr, \pinfml\leq 2$, while the second essentially establishes the boundedness of the Fourier transform on the intersection of Lebesgue spaces determined by the asymptotic exponents (it also provides a duality-type principle). We present their proofs in a separate subsection.

\subsection{Properties of the Fourier transform on variable Lebesgue spaces}

\begin{proposition}\label{prop: lim menor a 2}
Let $p(\cdot)\in \mathcal{P}(\rr)$ be such that $\hat f$ is a locally integrable function for every $f\in L^{p(\cdot)}(\rr)$. Then, $\pinfmr, \pinfml\leq 2$.
\end{proposition}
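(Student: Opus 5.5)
By symmetry (reflect $x\mapsto -x$, which commutes with the Fourier transform up to reflection) it suffices to prove $\pinfmr\le 2$. We argue by contradiction: assume $\pinfmr>2$ and fix $2<p_0<\pinfmr$. Then there is $M>0$ such that $p(x)\ge p_0$ for a.e. $x\ge M$. The idea is to build a function supported in $[M,+\infty)$ whose restriction lies in $L^{p_0}\cap L^\infty$, hence in $L^{p(\cdot)}$, but whose Fourier transform is not locally integrable. This is the classical phenomenon that the Fourier transform of $L^{p}$ functions with $p>2$ need not be a function.

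\textbf{Step 1 (embedding).} Let $E=[M,+\infty)$. If $f$ is supported in $E$ with $\|f\|_\infty\le 1$, then $|f(x)|^{p(x)}\le |f(x)|^{p_0}$ on $E\setminus E_\infty$, and on $E_\infty$ the sup-norm term is at most $1$. Hence $\rho_{p(\cdot)}(f)\le \int |f|^{p_0}+1<\infty$ whenever $f\in L^{p_0}$. So every bounded function in $L^{p_0}(E)$ belongs to $L^{p(\cdot)}(\rr)$.

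\textbf{Step 2 (a bad function).} We need a bounded $f\in L^{p_0}(\rr)$, supported in $E$, such that $\hat f$ (in the distributional sense) is not a locally integrable function. The plan is a chirp, $f(x)=x^{-\alpha}e^{i\pi x^2}\chi_{[M',\infty)}(x)$ with $M'\ge M$ and $1/p_0<\alpha\le 1/2$. This exponent range is nonempty because $p_0>2$, and it guarantees $f\in L^{p_0}$, $f\notin L^2$, and $f$ bounded. By stationary phase, $\hat f(\xi)$ behaves for large $\xi>0$ like $c\,\xi^{-\alpha}e^{-i\pi\xi^2}$, so $\hat f$ would be a locally integrable function, indeed in $L^\infty_{loc}$. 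Hence the chirp alone does not give local nonintegrability.

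The chirp fails because it only produces bad behavior at infinity, while local integrability concerns boundedness on compact sets. We therefore need singularities of $\hat f$ at finite points. The cleanest route is a duality/closed graph argument instead of an explicit construction. Suppose $\hat f\in L^1_{loc}$ for every $f\in L^{p(\cdot)}$. Then, by Step 1, $\hat f\in L^1([-1,1])$ for every bounded $f\in L^{p_0}(E)$.

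\textbf{Step 3 (closed graph).} Consider the space $X=L^{p_0}(E)\cap L^\infty(E)$ with norm $\max(\|\cdot\|_{p_0},\|\cdot\|_\infty)$, and the map $T\colon X\to L^1([-1,1])$, $f\mapsto \hat f|_{[-1,1]}$. This map is closed, since convergence in $X$ implies distributional convergence of $\hat f$. By the closed graph theorem, $T$ is bounded. Test $T$ with modulated bumps $f_N=\sum_{k=1}^N \varepsilon_k e^{2\pi i a_k x}\phi(x-b_k)$, where $\phi$ is a fixed bump with $\hat\phi\ne 0$ near $0$, the translates $b_k$ are far apart inside $E$, the signs $\varepsilon_k=\pm1$ are random, and the frequencies $a_k\in[-1,1]$. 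Then $\|f_N\|_\infty\lesssim 1$ and $\|f_N\|_{p_0}\sim N^{1/p_0}$. Meanwhile $\hat f_N(\xi)=\sum_k\varepsilon_k e^{-2\pi i b_k(\xi-a_k)}\hat\phi(\xi-a_k)$. Taking all $a_k=0$, Khintchine's inequality gives an average $L^1([-1,1])$ norm of order $N^{1/2}$. Since $1/2>1/p_0$, boundedness of $T$ is contradicted as $N\to\infty$. The main obstacle is making Step 3 rigorous: justifying the closedness of $T$ and handling the random-sign averaging (choose signs attaining at least the average). Both steps are standard.
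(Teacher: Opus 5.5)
Your proof is correct, but it takes a different route from the paper's.

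The paper cites Grafakos, Exercise~2.3.13, for the existence of some $g\in L^r(\rr)$, $2<r<\pinfmr$, whose distributional Fourier transform is not locally integrable. After reflecting and restricting to a half-line, it splits $g=\chi_E g+\chi_F g$ with $E=\{x>A:\ |g(x)|<1\}$. The piece $\chi_F g$ is supported on a set of finite measure, so it lies in $L^1(\rr)$ and $\widehat{\chi_F g}$ is bounded. Hence $\widehat{\chi_E g}$ is not locally integrable. Yet $\chi_E g\in L^{p(\cdot)}(\rr)$, because $|g|^{p(x)}\le |g|^r$ on $E$.

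You instead use the closed graph theorem to turn the qualitative hypothesis into the estimate $\|\hat f\|_{L^1[-1,1]}\lesssim \max\{\|f\|_{p_0},\|f\|_\infty\}$ for $f$ supported in $[M,\infty)$. You then refute this estimate with randomly signed, well-separated translates of a bump. Khintchine, applied pointwise in $\xi$ and integrated via Fubini, gives growth $N^{1/2}$ against $N^{1/p_0}$.

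The two technical points you flag do hold:
\begin{enumerate}
\item Closedness: convergence in $L^{p_0}$ gives convergence of $\hat f_n$ in $\mathcal S'$. So an $L^1[-1,1]$-limit of $\hat f_n|_{[-1,1]}$ agrees with $\hat f$ as a distribution on $(-1,1)$, hence a.e.
\item Khintchine's lower bound for complex coefficients follows by splitting into real and imaginary parts.
\end{enumerate}

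What your route buys is self-containedness. It essentially reproves the cited exercise. Since your test functions are bounded and supported in $[M,\infty)$ from the start, it needs no half-line reduction and no truncation into small and large values. It is also the same mechanism the paper uses in the proof of Theorem~\ref{thm:lagunares}, with random signs in place of lacunary frequencies. The paper's route is shorter once the exercise is granted.

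The chirp discussion in Step~2 and the modulations $e^{2\pi i a_k x}$ play no role and can be deleted.
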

\begin{proof}
Suppose that $\pinfmr >2$ and arrive at a contradiction. Take $2<r<\pinfmr$ and $A>0$ such that $p(x) \geq r$ for almost every $x>A.$ Since $r>2$ there is $g\in \Lspace{r}$ such that its distributional Fourier transform $\hat{g}$ is not a locally integrable function (see \cite[Exercise 2.3.13]{bo:grafakos:classical_fourier_analysis}). By restricting the domain and replacing $g$ with $\tilde{g}(x):=g(-x)$ if necessary, we may suppose $\supp (g) \subseteq \rr_{\geq 0}$.
Now, take $$E=\{x>A: |g(x)|<1\} \qquad \text{and}\qquad F=\rr_{\geq 0}\setminus E,$$ and write $g=g_1+g_2$, where $g_1:=\chi_E g$ and $g_2:=\chi_{F} g$. Since $F\subseteq [0,A] \cup \{x\in \mathbb R_{\geq 0}: |g(x)|\geq 1\}$ has finite measure 
and $g\in \Lspace{r}$, it follows that $g_2\in \Lspace{1}$ and, in particular, $\hat{g_2}$ is a bounded and measurable function. Therefore, $\hat{g_1}$  is not a locally integrable function. Since
$$\int_\rr \abs{g_1(x)}^{p(x)}dx = \int_E \abs{g(x)}^{p(x)}dx \leq \int_E \abs{g(x)}^{r}dx <+\infty,$$
this shows that $g_1\in L^{p(\cdot)}(\rr)$ but $\hat g_1$ is not a locally integrable function, which is the desired contradiction.
The case $\pinfml > 2$ is analogous taking $\supp (g) \subseteq \rr_{\leq 0}.$
\end{proof}

We now state one of the pillars of the proof of Theorem \ref{thm: necessary and sufficient thm}, a result that we believe is of independent interest. It relates the boundedness of the Fourier transform as an operator between variable Lebesgue spaces to its boundedness as an operator whose domain is given by the intersection of two classical Lebesgue spaces. Recall that, given two Banach spaces $X, Y$, we equip the space $X\cap Y$ with the norm
$$
\|f\|_{X\cap Y}:=\max\{\|f\|_X, \|f\|_Y\}.
$$
In what follows, given $h \in \rr$ and a function $f$, we denote by $\tau_h f$ the translation of $f$ given by $x \mapsto f(x-h)$. We also use the notation
$$
\langle f, g\rangle=\int_\rr f(x)g(x)\,dx
$$
whenever $f\in L^{p'(\cdot)}(\rr)$ and $g\in L^{p(\cdot)}(\rr)$, identifying $L^{p'(\cdot)}(\rr)$ with its canonical image in the dual space $(L^{p(\cdot)}(\rr))^*$. This identification is an isomorphism whenever $p(\cdot)\in \mathcal{P}_b(\rr)$ (see \cite[Theorem~2.80]{bo:cruz-uribe_fiorenza:variable_lebesgue_spaces}). Finally recall that, given a \emph{tempered distribution} $g\in \mathcal{S}'(\rr)$, its Fourier transform $\hat{g}\in \mathcal{S}'(\rr)$ is defined by
$$
\langle \hat{g}, f\rangle=\langle g, \hat{f}\rangle \qquad \text{for all $f\in \mathcal{S}(\rr)$,}
$$
where $\langle g, \hat{f}\rangle$ denotes the action of $g$ on $\hat{f}$ 
(see, for instance, \cite[Definition~2.3.7]{bo:grafakos:classical_fourier_analysis}). Since every $g\in L^{p(\cdot)}(\rr)$ defines a tempered distribution, we have
\begin{equation}\label{eq:duality}
\int_\rr g(x)\hat{f}(x)\,dx=\langle g, \hat f\rangle = \langle \hat{g}, f\rangle
\end{equation}
for every $g\in L^{p(\cdot)}(\rr)$ and every $f\in \mathcal{S}(\rr)$. Notice that the bracket notation $\langle \cdot, \cdot\rangle$ is used to denote both the integral and the distributional pairing. This should cause no confusion, since these notions agree whenever a distribution is represented by a function. Finally, we denote by $\overline{C^\infty_c(\rr)}^{L^{\pinfmr}\cap L^{\pinfMr}}$ the closure of the space $C^\infty_c(\rr)$ in $L^{\pinfmr}(\rr)\cap L^{\pinfMr}(\rr)$.

\begin{proposition}\label{prop: inclusion transformada de Fourier}
Let $p(\cdot), q(\cdot)\in \mathcal{P}(\rr)$ be such that the Fourier transform 
\[
\Fourier : L^{p(\cdot)}(\rr) \to L^{q(\cdot)}(\rr)
\]
is bounded. Then the operators
\[
\Fourier : \overline{C^\infty_c(\rr)}^{L^{\pinfmr}\cap L^{\pinfMr}}\longrightarrow L^{q(\cdot)}(\rr)
\quad \text{and} \quad 
\Fourier : \overline{C^\infty_c(\rr)}^{L^{\pinfml}\cap L^{\pinfMl}} \longrightarrow L^{q(\cdot)}(\rr)
\]
are bounded. Moreover, by duality,
\[
\Fourier : \overline{C^\infty_c(\rr)}^{\,L^{(q')_{\infty,r}^-}\cap L^{(q')_{\infty,r}^+}}
\longrightarrow L^{p'(\cdot)}(\rr)
\quad
\text{and}
\quad
\Fourier : \overline{C^\infty_c(\rr)}^{\,L^{(q')_{\infty,l}^-}\cap L^{(q')_{\infty,l}^+}}
\longrightarrow L^{p'(\cdot)}(\rr)
\]
are bounded.
\end{proposition}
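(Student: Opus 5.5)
The idea is translation invariance of $|\hat f|$. If $f\in C^\infty_c(\rr)$, then $\widehat{\tau_h f}(\xi)=e^{-2\pi i h\xi}\hat f(\xi)$, so $|\widehat{\tau_h f}|=|\hat f|$ for every $h$. Hence
\[
\|\hat f\|_{q(\cdot)}=\|\widehat{\tau_h f}\|_{q(\cdot)}\le C\|\tau_h f\|_{p(\cdot)}\quad\text{for all } h.
\]
It therefore suffices to prove
\[
\liminf_{h\to+\infty}\|\tau_h f\|_{p(\cdot)}\lesssim \max\{\|f\|_{\pinfmr},\|f\|_{\pinfMr}\}
\]
with a constant independent of $f$. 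One then extends by density to the closure of $C^\infty_c(\rr)$ in $L^{\pinfmr}\cap L^{\pinfMr}$: if $f_n\to f$ in that closure, then $\hat f_n$ is Cauchy in $L^{q(\cdot)}$ and also converges to $\hat f$ as tempered distributions, so the limits agree.

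\textbf{Key step.} Fix $\epsilon>0$ and $f$ supported in $[-R,R]$. For $h$ large, $\supp\tau_h f\subseteq[h-R,h+R]$, and there a.e.\ $p_0:=\pinfmr-\epsilon\le p(x)\le \pinfMr+\epsilon=:p_1$ (with the natural modification if $\pinfMr=\infty$). Split the support of $\tau_h f/\lambda$ into the part where $|\tau_h f/\lambda|\le1$ and the part where it exceeds $1$. Then
\[
\rho_{p(\cdot)}(\tau_h f/\lambda)\le \int|f/\lambda|^{p_0}+\int|f/\lambda|^{p_1},
\]
so $\|\tau_h f\|_{p(\cdot)}\le 2\max\{\|f\|_{p_0},\|f\|_{p_1}\}$ by the standard modular–norm relation.

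\textbf{Main obstacle.} The bound above uses $p_0,p_1$, not $\pinfmr,\pinfMr$. Since $f\in C^\infty_c$, $\|f\|_{p}$ is continuous in $p$ on $[1,\infty]$, so letting $\epsilon\to0$ gives the bound with $\pinfmr,\pinfMr$. The constant stays uniform because it is $2C$. The $L^\infty$ part of the modular needs a little care when $p(x)=\infty$ on part of the support, but the same splitting handles it.

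The left-hand side ($h\to-\infty$) is identical.

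\textbf{Duality.} For $f,g\in C^\infty_c$, use \eqref{eq:duality}:
\[
|\langle \hat g,f\rangle|=|\langle g,\hat f\rangle|\le 2\|g\|_{q'(\cdot)}\|\hat f\|_{q(\cdot)}\lesssim\|g\|_{q'(\cdot)}\|f\|_{p(\cdot)}
\]
by Hölder's inequality in variable spaces. Taking the supremum over $f$ with $\|f\|_{p(\cdot)}\le1$ and using the norm-conjugate formula (valid for all exponents via the associate norm, with $C^\infty_c$ testing sufficient by Fatou/density) gives $\|\hat g\|_{p'(\cdot)}\lesssim\|g\|_{q'(\cdot)}$. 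So $\Fourier:L^{q'(\cdot)}\to L^{p'(\cdot)}$ is bounded on $C^\infty_c$ and extends to its closure. Applying the first part to the exponent pair $(q',p')$ gives the stated operators. One subtlety: the first part needs boundedness on all of $L^{q'(\cdot)}$, but its proof only uses $f\in C^\infty_c$, so it suffices.
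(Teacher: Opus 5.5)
Your first part is the paper's argument: modulation invariance of $|\hat f|$, the pointwise bound giving $\rho_{p(\cdot)}\le\rho_{p_0}+\rho_{p_1}$ on the far-out support, then $\epsilon\to0$ and density. Your closing remark that the translation argument only needs $C^\infty_c$ inputs is also how the paper finishes.

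\textbf{The gap is in the duality step.} There you restrict the test function $f$ to $C^\infty_c(\rr)$ and claim this suffices in the norm-conjugate formula ``by Fatou/density''. Density of $C^\infty_c(\rr)$ in $L^{p(\cdot)}(\rr)$ fails when $p_+=\infty$, and then the claim is false, even for $h=\hat g$ with $g\in C^\infty_c(\rr)$.

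Here is a counterexample inside the hypotheses of the proposition. Take a measurable $A\subseteq\rr$ of finite measure such that $A$ and $\rr\setminus A$ both meet every open interval in positive measure. Put $p=\infty$ on $A$, $p=1$ off $A$, and $q\equiv\infty$. Lemma~\ref{lemma:modificar exponentes en medida finita}, applied to $\Fourier\colon L^1\to L^\infty$, shows that $\Fourier\colon L^{p(\cdot)}(\rr)\to L^{q(\cdot)}(\rr)$ is bounded.
\begin{itemize}
\item If $f$ is continuous with $\|f\|_{p(\cdot)}\le1$, then $\sup|f|=\|f\|_{L^\infty(A)}\le1$. Hence $\sup\{|\int hf| : f\in C^\infty_c(\rr),\ \|f\|_{p(\cdot)}\le1\}\le\|h\|_1$.
\item Since $p'=\infty$ on $\rr\setminus A$, every continuous $h$ satisfies $\|h\|_{p'(\cdot)}\ge\sup|h|$.
\item Take $g_\delta(x)=g_0(\delta x)$ with $g_0\in C^\infty_c(\rr)$. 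Then $\|\hat g_\delta\|_1=\|\hat g_0\|_1$, while $\sup|\hat g_\delta|=\delta^{-1}\sup|\hat g_0|$.
\end{itemize}
So no constant $c$ satisfies $\|\hat g\|_{p'(\cdot)}\le c\sup\{|\int\hat g f| : f\in C^\infty_c(\rr),\ \|f\|_{p(\cdot)}\le1\}$. The desired bound $\|\hat g\|_{p'(\cdot)}\lesssim\|g\|_{q'(\cdot)}$ happens to be true here, but it does not follow from your $C^\infty_c$-tested estimate.

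\textbf{The repair} is what the paper does: let the test function range over all of $L^{p(\cdot)}(\rr)$. Every $f\in L^{p(\cdot)}(\rr)$ is a tempered distribution whose Fourier transform is, by hypothesis, the function $\hat f\in L^{q(\cdot)}(\rr)$. So \eqref{eq:duality} gives $\int f\hat g=\int\hat f g$ for $g\in C^\infty_c(\rr)$. H\"older's inequality then yields $|\int f\hat g|\lesssim\|g\|_{q'(\cdot)}\|f\|_{p(\cdot)}$ for every $f\in L^{p(\cdot)}(\rr)$. The norm-conjugate formula over the full unit ball of $L^{p(\cdot)}(\rr)$ holds for every exponent (the paper cites Cruz-Uribe--Fiorenza, Proposition~2.79). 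It gives $\|\hat g\|_{p'(\cdot)}\lesssim\|g\|_{q'(\cdot)}$, and the rest of your argument goes through unchanged.
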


\begin{proof} First let us note that, by Proposition \ref{prop: lim menor a 2}, the hypothesis implies $\pinfmr, \pinfml\leq 2$.

We prove that $\Fourier :  \overline{C^\infty_c(\rr)}^{L^{\pinfmr}\cap L^{\pinfMr}} \to L^{q(\cdot)}(\rr)$ is bounded, the other case is analogous. Let $0<\varepsilon<\pinfmr - 1$ and let $f\in C^\infty_c(\rr)$ be such that $\|f\|_{L^{\pinfmr - \epsilon}\cap L^{\pinfMr+\varepsilon}}=1$ (when $\pinfmr=1$, the argument can be adapted by replacing $\pinfmr-\varepsilon$ with $1$). By the hypothesis and the modulation property, there exists a constant $K>0$, independent of $\varepsilon$, such that
$$
\|\hat{f}\|_{L^{q(\cdot)}(\rr)}\leq K \|f\|_{L^{p(\cdot+h)}(\rr)}
$$ 
for all $h>0$. Choose $h_0>0$ sufficiently large so that 
$$\pinfmr - \epsilon \leq p(x) \leq \pinfMr + \epsilon\quad \text{for every $x\in \mathbb{R}$ such that $x-h_0\in \supp(f)$}$$ 
(note that $h_0$ depends on $f$) and define 
\[
	\tilde{p}(x) :=
	\begin{cases}
		p(x) &\text{if }  x-h_0\in \supp(f)\\
		\pinfmr &\text{if } \text{not}.
	\end{cases}
\]
Thus, $\rho_{p(\cdot + h_0)}(f)=\rho_{\tilde{p}(\cdot + h_0)}(f)$. Now, since $\pinfmr - \epsilon \leq \tilde{p}(\cdot)\leq \pinfMr+\varepsilon$, we have
$$
\rho_{\tilde{p}(\cdot)}(g)\leq \rho_{\pinfmr - \varepsilon}(g) + \rho_{\pinfMr+\varepsilon}(g) \quad \text{for every} \quad g\in L^{\pinfmr - \epsilon}(\rr)\cap L^{\pinfMr+\varepsilon}(\rr)$$
(see \cite[Eq.~(3.3.8)]{bo:diening_harjulehto_hasto_ruzicka:Lebesgue_and_Sobolev_spaces_with_variable_exponents}).
In particular, 
$$
\rho_{p(\cdot+h_0)}(f)=\rho_{\tilde{p}(\cdot+h_0)}(f)=\rho_{\tilde{p}(\cdot)}(\tau_{h_0}f)\leq \rho_{\pinfmr - \varepsilon}(\tau_{h_0}f) + \rho_{\pinfMr+\varepsilon}(\tau_{h_0}f)\leq 2
$$
and, consequently, $\|f\|_{p(\cdot+h_0)}\leq 2$. This proves that 
$\|\hat{f}\|_{q(\cdot)}\leq 2K$
for every $f\in C^\infty_c(\rr)$ satisfying $\|f\|_{L^{\pinfmr - \epsilon}\cap L^{\pinfMr+\varepsilon}}=1$. Therefore,
$$
\|\hat{f}\|_{L^{q(\cdot)}(\rr)}\leq K \|f\|_{L^{p(\cdot+h_0)}(\rr)}\leq 2 K \|f\|_{L^{\pinfmr - \epsilon}\cap L^{\pinfMr+\varepsilon}}
$$
for every $f\in C^\infty_c(\rr)$.
Taking limit $\varepsilon\to 0$ and applying the dominated convergence theorem we obtain
$$\|\hat{f}\|_{L^{q(\cdot)}(\rr)}\leq 2 K  \|f\|_{L^{\pinfmr}\cap L^{\pinfMr}}$$
for every $f\in C^\infty_c(\rr)$.
The first statement now follows by density.

For the dual statement, take any $f \in C^\infty_c(\rr)$. Then, by \eqref{eq:duality} and the boundedness of $\Fourier : L^{p(\cdot)}(\rr) \to L^{q(\cdot)}(\rr)$, for every $g \in L^{p(\cdot)}(\rr)$ with $\|g\|_{L^{p(\cdot)}(\rr)}=1$ we have
\[
|\langle g, \hat f\rangle| = |\langle \hat g, f\rangle|
\le C_1 \|f\|_{L^{q'(\cdot)}(\rr)} \|\hat g\|_{L^{q(\cdot)}(\rr)}
\le C_2 \|f\|_{L^{q'(\cdot)}(\rr)} \|g\|_{L^{p(\cdot)}(\rr)}
= C_2 \|f\|_{L^{q'(\cdot)}(\rr)}.
\]
Hence, by \cite[Proposition 2.79]{bo:cruz-uribe_fiorenza:variable_lebesgue_spaces}, there exists a constant $K>0$ such that
\[
\|\hat f\|_{L^{p'(\cdot)}(\rr)} \le K \|f\|_{L^{q'(\cdot)}(\rr)}.
\]
Observe that $K$ is independent of $f$. Arguing as in the first part, using the modulation property we obtain that for every $h>0$,
\[
\|\hat f\|_{L^{p'(\cdot)}(\rr)} \le K \|f\|_{L^{q'(\cdot+h)}(\rr)}.
\]
Given $\varepsilon>0$, proceeding as before, we can choose $h_0>0$ sufficiently large so that
\[
\|f\|_{L^{q'(\cdot+h_0)}(\rr)} 
\le 2 \|f\|_{L^{(q')_{\infty,r}^- -\varepsilon}(\rr)\cap L^{(q')_{\infty,r}^+ + \varepsilon}(\rr)},
\]
with the caveat that, if $(q')_{\infty,r}^- = +\infty$, the terms 
$(q')_{\infty,r}^- - \varepsilon$ and $(q')_{\infty,r}^+ + \varepsilon$ 
 are replaced by $\frac{1}{\varepsilon}$ and $+\infty$, respectively. 
Letting $\varepsilon \to 0$, we obtain
\[
\|\hat f\|_{L^{p'(\cdot)}(\rr)} 
\le 2K \|f\|_{L^{(q')_{\infty,r}^-}(\rr)\cap L^{(q')_{\infty,r}^+}(\rr)}.
\]
The result then follows by density. The other case is analogous.
\end{proof}

\begin{remark}\label{rmk: dual de la transf}
If we consider $p(\cdot), q(\cdot)\in \mathcal{P}_b(\rr)$ in Proposition~\ref{prop: inclusion transformada de Fourier}, then we have that
$$
\Fourier : L^{\pinfmr}(\rr)\cap L^{\pinfMr}(\rr)\to L^{q(\cdot)}(\rr)\quad \text{and}\quad \Fourier : \,L^{(q')_{\infty,r}^-}(\rr)\cap L^{(q')_{\infty,r}^+}(\rr)
\to L^{p'(\cdot)}(\rr)
$$
are bounded, and similarly for the essential limits at $-\infty$. Moreover, following the proof of the duality statement, it is not difficult to see that $$\Fourier\colon \overline{C^\infty_c(\rr)}^{L^{q'(\cdot)}}\to L^{p'(\cdot)}(\rr)$$ is bounded whenever $\Fourier : L^{p(\cdot)}(\rr) \to L^{q(\cdot)}(\rr)$ is.
Hence, if $p(\cdot), q(\cdot)\in \mathcal{P}_b(\rr)$, the operator $\Fourier : L^{p(\cdot)}(\rr) \to L^{q(\cdot)}(\rr)$ is bounded if and only if $\Fourier : L^{q'(\cdot)}(\rr) \to L^{p'(\cdot)}(\rr)$ is bounded. The essential point is that $C_c^\infty(\rr)$ is dense in $L^{p(\cdot)}(\rr)$ whenever $p_+<\infty$ (see \cite[Theorem~3.4.12]{bo:diening_harjulehto_hasto_ruzicka:Lebesgue_and_Sobolev_spaces_with_variable_exponents}).
\end{remark}

\subsection{Proof of Theorem \ref{thm: necessary and sufficient thm}.}
We are now in a position to prove Theorem~\ref{thm: necessary and sufficient thm}. We begin with the easy implication $(i)\Rightarrow (ii)$. In virtue of \cite[Theorem~3.3.1]{bo:diening_harjulehto_hasto_ruzicka:Lebesgue_and_Sobolev_spaces_with_variable_exponents} and the hypotheses in $(i)$, we have that
$$
L^{p(\cdot)}(\mathbb{R})\hookrightarrow L^{p_-}(\mathbb{R}) \quad \text{and}\quad L^{(p_-)'}(\mathbb{R})\hookrightarrow L^{q(\cdot)}(\mathbb{R})
$$
with continuous inclusions (the norm of these inclusions depends on $\|1\|_{r(\cdot)}$ and $\|1\|_{s(\cdot)}$). Taking this into account and applying the classical Hausdorff--Young inequality, we obtain
$$
\|\hat{f}\|_{q(\cdot)}\lesssim \|\hat{f}\|_{(p_-)'}\leq \|f\|_{p_-}\lesssim \|f\|_{p(\cdot)},
$$
which is the desired statement.

We now turn our attention to the implication $(ii) \Rightarrow (iii)$. We assume that the Fourier transform $\Fourier : L^{p(\cdot)}(\mathbb{R})\to L^{q(\cdot)}(\mathbb{R})$ is bounded and we need to prove that the conditions
$$
\pinfmr=\pinfml = p_-\leq 2, \quad \qinfMr=\qinfMl =q_+ \quad  \text{and} \quad (p_-)'=q_+
$$
hold. 

\noindent\textbf{Step 1.}
Let us begin by showing that $\pinfmr,\pinfml\leq (q')_-$. It suffices to prove the inequality $\pinfmr\leq (q')_-$, since the proof of $\pinfml\leq (q')_-$ is analogous. Suppose, to the contrary, that $\pinfmr> (q')_-$. Then, there exist $1\leq r'<\pinfmr$ and a bounded set of positive measure $E$ such that $$E\subseteq\{x\in \mathbb R: q'(x)\leq r'\}.$$ By Proposition~\ref{prop: inclusion transformada de Fourier}, it suffices to show that there exists a function $$f\in \overline{C^\infty_c(\rr)}^{L^{\pinfmr}\cap L^{\pinfMr}}$$ such that $\hat f\notin L^{q(\cdot)}(\mathbb R)$. By considering translations of $f$ and subsets of $E$, we may suppose that $E\subseteq [-1,1]$ and that $0$ is a Lebesgue density point of $E$.
Let $0<\alpha<1$ be such that $\frac{1}{r}<\alpha<1-\frac{1}{\pinfmr}$, where $r$ is the conjugate of $r'$, and let $f$ be the inverse Fourier transform of $g(x):=|x|^{-\alpha} \chi_{[-1,1]}(x)$.

On the one hand, since $g\in L^1(\rr)$, the function $f$ is continuous (moreover, $f\in C_0(\rr)$) and belongs to $L^\infty(\rr)$. We will show that $f\in  L^{{\pinfmr}}(\mathbb R)$. This,
together with the inclusion $L^{{\pinfmr}}(\mathbb R)\cap L^{\infty}(\mathbb R)\subseteq L^{\pinfmr}(\mathbb R)\cap L^{{\pinfMr}}(\mathbb R)$ (see \cite[Theorem~3.3.11]{bo:diening_harjulehto_hasto_ruzicka:Lebesgue_and_Sobolev_spaces_with_variable_exponents}), yields $$f\in  L^{{\pinfmr}}(\mathbb R)\cap L^{{\pinfMr}}(\mathbb R).$$ Since $f\in C_0(\rr)$, it follows that $$f\in \overline{C^\infty_c(\rr)}^{L^{\pinfmr}\cap L^{\pinfMr}}.$$
To prove that $f\in  L^{{\pinfmr}}(\mathbb R)$ it suffices to show that
\begin{equation}\label{eq: f en Lpinf}
\int_{|x|>1} |f(x)|^{\pinfmr}\, dx <\infty. 
\end{equation}
Since $g$ is a real and even function, then $f$ is real and even and we have
$$
f(x)= \int_\rr g(\xi)\cos(2\pi x\xi)\, d\xi = \int_{-1}^1 |\xi|^{-\alpha} \cos(2\pi x\xi)\, d\xi = 2\int_0^1 \xi^{-\alpha} \cos(2\pi x\xi)\, d\xi.
$$
Then, a simple change of variables yields
$$
f(x)=2 (2\pi x)^{\alpha-1}\int_0^{2\pi x}t^{-\alpha} \cos(t)\, dt \qquad \text{for $x>1$}
$$
and, since
$$
\int_0^\infty t^{s-1}\cos(t)=\Gamma(s) \cos\left(\frac{\pi s}{2}\right) \qquad \text{for $0<\text{Re}(s)<1$}
$$
(see, for instance, the table of Mellin transforms in \cite{gradshteyn2014table}), we obtain
\begin{eqnarray*}
f(x)=2 (2\pi x)^{\alpha-1} \left[\Gamma(1-\alpha) \sin\left(\frac{\pi \alpha}{2}\right) - \int_{2\pi x}^\infty t^{-\alpha} \cos(t)\, dt\right] \quad \text{for $x>1$.}
\end{eqnarray*}
An integration by parts argument shows that
$$
f(x)= 2 (2\pi x)^{\alpha-1} \Gamma(1-\alpha) \sin\left(\frac{\pi \alpha}{2}\right) + O(x^{-1}) \quad \text{for $x>1$}
$$
and, since $\pinfmr(1-\alpha)>1$ and $f$ is an even function, then \eqref{eq: f en Lpinf} holds.

On the other hand, let us show that $\hat{f}\notin \Lspace{q(\cdot)}$. Since
$f=\check{g}$, we have $\hat{f}=g$ in the sense of tempered distributions. As
$g\in L^1(\mathbb{R})$, it follows that $\hat{f}=g$ almost everywhere. Now, if $g$ were in $L^{q(\cdot)}(\mathbb R)$, then we would have $g\in L^{q(\cdot)}(E)$ and, since $E$ is bounded and $r\leq q(x)$ for every $x\in E$, it would follow that $g\in L^{r}(E)$. Let us prove that this is not the case.
Noting that $|g(x)|^r\geq |x|^{-1}$ for every $x\in E$ (recall that $r\alpha>1$), it suffices to prove that $|x|^{-1}\chi_E\notin L^1(E)$. This will follow from the fact that $0$ is a Lebesgue density point of $E$, that is, $$\frac{|E\cap [-\varepsilon,\varepsilon]|}{2\varepsilon} \xrightarrow[\epsilon \to 0]{} 1.$$ 
Indeed, there exists $\varepsilon_0>0$ such that $|E\cap [-\varepsilon,\varepsilon]|>\varepsilon$ for every $0<\varepsilon\leq \varepsilon_0$. Then, taking
$$
A_n=\left(-\frac{\varepsilon_0}{4^n}, -\frac{\varepsilon_0}{4^{n+1}}\right] \cup \left[ \frac{\varepsilon_0}{4^{n+1}}, \frac{\varepsilon_0}{4^{n}}\right)
$$
and noting that the sets $A_n$ are pairwise disjoint, we have
\begin{equation}\label{eq: not in L1}
\int_E |x|^{-1} dx \geq \sum_{n=1}^\infty\int_{E\cap A_n}|x|^{-1}dx\geq \sum_{n=1}^\infty \frac{4^n}{\varepsilon_0}\cdot |E\cap A_n|. 
\end{equation}
But, since 
$$
|E\cap A_n|=\left|E\cap [-\varepsilon_0/4^n, \varepsilon_0/4^n]\right| - |E\cap[-\varepsilon_0/4^{n+1}, \varepsilon_0/4^{n+1}]| > \frac{\varepsilon_0}{4^{n}} - \frac{2\varepsilon_0}{4^{n+1}}=\frac{\varepsilon_0}{4^n2},
$$
it follows from \eqref{eq: not in L1} that $|x|^{-1}\chi_E\notin L^1(E)$.

\noindent\textbf{Step 2.} Now we show that $(q')_-\leq p_-$. Since $(q')_- \le (q')_{\infty,r}^-$, it suffices to show that $(q')_{\infty,r}^- \le p_-$. At this point, it is worth noting that if the exponents $p(\cdot)$ and $q(\cdot)$ were bounded, then 
$
\Fourier : L^{q'(\cdot)}(\rr) \to L^{p'(\cdot)}(\rr)
$ would be bounded by Remark~\ref{rmk: dual de la transf}, and the statement would follow from Step~1. In the general case, however, we argue again by contradiction and assume that $p_- < (q')_{\infty,r}^-$.
Proceeding as in Step~1, \emph{mutatis mutandis}, we consider a bounded set $E$ of positive measure such that $$E \subseteq \{x \in \rr : p(x) \le r'\},$$ where $1\leq r'<(q')_{\infty,r}^-$, and define a function $g\in L^1(\rr)$ such that
$$
\check{g}\in L^{(q')_{\infty,r}^-}(\rr)\cap L^{\infty}(\rr) \subseteq L^{(q')_{\infty,r}^-}(\rr)\cap L^{(q')_{\infty,r}^+}(\rr)\quad \text{and}\quad g\notin L^{p'(\cdot)}(\rr).
$$
Now, since $f:=\check{g}\in C_0(\rr)$, we have $$f\in \overline{C^\infty_c(\rr)}^{\,L^{(q')_{\infty,r}^-}\cap L^{(q')_{\infty,r}^+}}$$ and $\hat{f}=g\notin L^{p'(\cdot)}(\rr)$. This contradicts Proposition~\ref{prop: inclusion transformada de Fourier}.

\noindent\textbf{Step 3.} From the previous steps, we deduce that 
$$
\pinfmr=\pinfml = p_-\leq 2 \qquad \text{and}\qquad (p_-)'=q_+.
$$
Indeed, since we always have $p_-\leq \pinfmr$ and $p_-\leq \pinfml$, in virtue of Steps~1 and 2, we have
$$
p_-\leq \pinfmr, \pinfml\leq (q')_-\leq p_-
$$
and, consequently, $\pinfmr=\pinfml = p_-$ and $(p_-)'=((q')_-)'=q_+$. The fact that $p_-\leq 2$ follows now from Proposition~\ref{prop: lim menor a 2}.

\noindent\textbf{Step 4.} Finally, let us show that $\qinfMr=\qinfMl =q_+$. A careful look at the proof of Step~2 shows that
$$
(\qinfMr)'=(q')_{\infty, r}^-\leq p_- \qquad \text{and}\qquad (\qinfMl)'=(q')_{\infty, l}^-\leq p_-.
$$
Since $p_-=(q_+)'$ by Step~3, and since we always have $\qinfMr, \qinfMl \leq q_+$, we deduce the desired statement.
\hfill\qedsymbol

As a first consequence of Theorem~\ref{thm: necessary and sufficient thm}, taking $q(\cdot) := p'(\cdot)$ (which implies $r(\cdot) = s(\cdot)$), we obtain sufficient conditions for the validity of the \emph{classical} Hausdorff--Young inequality with conjugate exponents. These conditions, which concern the regularity of the exponent at infinity, are stronger than those in item $(i)$ of Theorem~\ref{thm: necessary and sufficient thm} but are somewhat easier to verify.
Recall that a function $r(\cdot)$ is \emph{log-H\"older continuous at infinity}, denoted $r(\cdot) \in \LH_\infty$, if there exist constants $C_\infty$ and $r_\infty$ such that for all $x \in \Omega$
\[
\abs{r(x) - r_\infty} \le \frac{C_\infty}{\log(e + \abs{x})}.
\]
Combining Theorem~\ref{thm: necessary and sufficient thm} with
\cite[Remark~2.46]{bo:cruz-uribe_fiorenza:variable_lebesgue_spaces},
we obtain the following corollary.

\begin{corollary}\label{cor:p- and LHinfty imply F bounded}
Let $p(\cdot)\in \mathcal{P}(\rr)$ be such that $1/p(\cdot) \in \LH_\infty$ and
$
p_-=\lim_{|x|\to +\infty}p(x)\leq  2.
$
Then, the Fourier transform $\Fourier\colon L^{p(\cdot)}(\mathbb R)\to L^{p'(\cdot)}(\mathbb R)$ is bounded.
\end{corollary}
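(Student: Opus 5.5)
The plan is to verify condition $(i)$ of Theorem~\ref{thm: necessary and sufficient thm} with $q(\cdot)=p'(\cdot)$ and then invoke the implication $(i)\Rightarrow(ii)$. With this choice, $(p_-)'\geq q_+=(p_-)'$ holds trivially, since $q_+=(p')_+=(p_-)'$. The assumption $p_-\le 2$ is given. Moreover,
$$
\frac{1}{s(\cdot)}=\frac{1}{p'(\cdot)}-\frac{1}{(p_-)'}=\Big(1-\frac{1}{p(\cdot)}\Big)-\Big(1-\frac1{p_-}\Big)=\frac1{p_-}-\frac1{p(\cdot)}=\frac{1}{r(\cdot)},
$$
so $r(\cdot)=s(\cdot)$ and everything reduces to showing $1\in L^{r(\cdot)}(\rr)$.

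Set $p_\infty:=\lim_{|x|\to\infty}p(x)=p_-$. From $1/p(\cdot)\in \LH_\infty$, the function $1/r(\cdot)=1/p_- -1/p(\cdot)$ is log-H\"older continuous at infinity with limit $1/p_- - 1/p_\infty=0$; that is,
$$
0\le \frac{1}{r(x)}\le \frac{C_\infty}{\log(e+|x|)}.
$$
Equivalently, $r(\cdot)$ tends to $+\infty$ at a logarithmic rate. This is precisely the setting of \cite[Remark~2.46]{bo:cruz-uribe_fiorenza:variable_lebesgue_spaces}: if $1/p_1-1/p_2$ is $\LH_\infty$-small in this sense, then $L^{p_2}\hookrightarrow L^{p_1}$-type embeddings hold, equivalently $1\in L^{r(\cdot)}$.

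To make the argument self-contained, I would check directly that $\rho_{r(\cdot)}(\lambda^{-1})<\infty$ for some $\lambda>1$. On $\{r=\infty\}$ the contribution is $1/\lambda$, which is finite. Elsewhere,
$$
\lambda^{-r(x)}\le \lambda^{-\log(e+|x|)/C_\infty}=(e+|x|)^{-\log\lambda/C_\infty},
$$
and this is integrable over $\rr$ once $\log\lambda>C_\infty$.

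The one subtle point is that the $\LH_\infty$ constant for $1/r$ is inherited directly from that of $1/p$, since $1/r$ differs from $1/p$ only by a sign and a constant. After that the computation above is routine, and the corollary follows from Theorem~\ref{thm: necessary and sufficient thm}.
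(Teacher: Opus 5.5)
Your proof is correct and follows the paper's route. Taking $q(\cdot)=p'(\cdot)$ gives $r(\cdot)=s(\cdot)$, so the implication $(i)\Rightarrow(ii)$ of Theorem~\ref{thm: necessary and sufficient thm} reduces everything to showing $1\in L^{r(\cdot)}(\rr)$. The paper gets this last fact by citing \cite[Remark~2.46]{bo:cruz-uribe_fiorenza:variable_lebesgue_spaces}, while you verify it directly from $0\le 1/r(x)\le C_\infty/\log(e+|x|)$ via the modular bound $\lambda^{-r(x)}\le (e+|x|)^{-\log\lambda/C_\infty}$. That is fine, and the degenerate case $C_\infty=0$ is trivial.
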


Another consequence of Theorem~\ref{thm: necessary and sufficient thm} is the following corollary, which provides necessary conditions for the boundedness of $\Fourier :L^{p(\cdot)}(\mathbb R)\to L^{q(\cdot)}(\mathbb R)$ when either $p(\cdot)$ or $q(\cdot)$ admits a limit at infinity. This result will play a key role in Section~\ref{Seccion caracteristicas}. It is worth noting that a related result, in the setting where the exponent $p(\cdot)$ is continuous and non-decreasing on $(0,+\infty)$, was established by Saucedo and Tikhonov in the proof of \cite[Theorem~1.1]{ar:saucedo_tikhonov:2025:note_on_fourier_inequalities}. 

\begin{corollary}\label{cor:Fourier bounded from p_inf to p var}
Let $p(\cdot), q(\cdot) \in \mathcal{P}(\rr)$ be such that $\Fourier :L^{p(\cdot)}(\mathbb R)\to L^{q(\cdot)}(\mathbb R)$ is bounded. Then, the following assertions hold.
\begin{enumerate}
\item If $\ds \lim_{|x|\to +\infty} p(x)=p_\infty$, then $p_\infty\leq 2$ and $\Fourier :L^{p_\infty}(\mathbb R)\to L^{q(\cdot)}(\mathbb R)$ is bounded.
\item If $\ds \lim_{|x|\to +\infty} q(x)=q_\infty$, then $q_\infty\geq 2$ and 
$$
\Fourier :L^{q_\infty'}(\mathbb R)\to L^{p'(\cdot)}(\mathbb R)\quad \text{and} \quad \Fourier :L^{p(\cdot)}(\mathbb R)\to L^{q_\infty}(\mathbb R)
$$
are bounded.
\end{enumerate}
\end{corollary}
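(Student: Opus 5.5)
Both items should follow from Proposition~\ref{prop: inclusion transformada de Fourier}, Proposition~\ref{prop: lim menor a 2} and Theorem~\ref{thm: necessary and sufficient thm}, with a density argument to remove the closure of $C^\infty_c(\rr)$.

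\emph{Item (i).} If $\lim_{|x|\to+\infty}p(x)=p_\infty$, then all four essential limits $\pinfmr,\pinfMr,\pinfml,\pinfMl$ equal $p_\infty$. Proposition~\ref{prop: lim menor a 2} then gives $p_\infty\le 2<\infty$. The first part of Proposition~\ref{prop: inclusion transformada de Fourier} shows that $\Fourier$ is bounded from $\overline{C^\infty_c(\rr)}^{L^{p_\infty}}$ into $L^{q(\cdot)}(\rr)$. Since $p_\infty<\infty$, this closure is all of $L^{p_\infty}(\rr)$. The only point to check is that the extension by density agrees with the Fourier transform of $L^{p_\infty}$ functions. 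This holds because $1\le p_\infty\le 2$: convergence in $L^{p_\infty}$ implies convergence in $\mathcal S'$ of the Fourier transforms, and $L^{q(\cdot)}$ convergence implies a.e. convergence along a subsequence.

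\emph{Item (ii).} If $\lim_{|x|\to+\infty}q(x)=q_\infty$, then all essential limits of $q$ at $\pm\infty$ equal $q_\infty$, and those of $q'$ equal $q_\infty'$. By Theorem~\ref{thm: necessary and sufficient thm} (iii), $q_\infty=\qinfMr=q_+=(p_-)'\ge 2$, since $p_-\le 2$. Hence $q_\infty'\le 2$, in particular $q_\infty'<\infty$. The duality part of Proposition~\ref{prop: inclusion transformada de Fourier} gives boundedness of $\Fourier$ from $\overline{C^\infty_c(\rr)}^{L^{q_\infty'}}=L^{q_\infty'}(\rr)$ into $L^{p'(\cdot)}(\rr)$, with the same consistency remark as in item (i).

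For the second operator in (ii) I would dualize back. Take $g\in L^{p(\cdot)}(\rr)$; since $q_\infty\ge 2$ we have $q_\infty'\le 2<\infty$. For $f\in C^\infty_c(\rr)$, use \eqref{eq:duality} and the boundedness of $\Fourier:L^{q_\infty'}\to L^{p'(\cdot)}$:
\[
|\langle \hat g,f\rangle|=|\langle g,\hat f\rangle|\lesssim \|g\|_{p(\cdot)}\|\hat f\|_{p'(\cdot)}\lesssim \|g\|_{p(\cdot)}\|f\|_{q_\infty'}.
\]
We already know $\hat g\in L^{q(\cdot)}(\rr)$ is locally integrable, so $\langle \hat g,f\rangle=\int \hat g f$. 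Taking the supremum over $f\in C^\infty_c(\rr)$ with $\|f\|_{q_\infty'}\le1$, which is dense in $L^{q_\infty'}$ because $q_\infty'<\infty$, the classical $L^{q_\infty}$--$L^{q_\infty'}$ duality gives $\|\hat g\|_{q_\infty}\lesssim\|g\|_{p(\cdot)}$. This covers $q_\infty=\infty$ too, since $L^\infty=(L^1)^*$ and the norming works with $C_c^\infty$ functions for locally integrable functions.

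The main obstacle is the density and identification step. One must make sure that the operators obtained on closures in Proposition~\ref{prop: inclusion transformada de Fourier} coincide with the genuine (distributional) Fourier transform on the full spaces $L^{p_\infty}$ and $L^{q_\infty'}$. One must also justify the norming of $\hat g$ by compactly supported smooth test functions, including the endpoint $q_\infty=\infty$. Both are handled by $p_\infty,q_\infty'\in[1,2]$ together with local integrability of $\hat g$.
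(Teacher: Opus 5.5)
Your proposal is correct and follows essentially the same route as the paper. Item (i) and the first operator in (ii) come from Proposition~\ref{prop: inclusion transformada de Fourier} once all essential limits reduce to $p_\infty$ (resp.\ $q_\infty'$). The bounds $p_\infty\le 2\le q_\infty$ come from the necessary conditions: you cite Proposition~\ref{prop: lim menor a 2} for $p_\infty\le 2$, whereas the paper cites Theorem~\ref{thm: necessary and sufficient thm}, which rests on it. The operator $\Fourier\colon L^{p(\cdot)}(\rr)\to L^{q_\infty}(\rr)$ is obtained by dualizing against test functions via \eqref{eq:duality}, exactly as in the paper. Your additional checks — that the density extension agrees with the distributional Fourier transform, and that $C^\infty_c$ functions norm $\hat g$ even when $q_\infty=\infty$ — make explicit details the paper leaves implicit.
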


\begin{proof}
The inequality $p_\infty\leq 2$ in item $(i)$ follows from implication $(ii)\Rightarrow (iii)$ in Theorem~\ref{thm: necessary and sufficient thm}. The boundedness of the Fourier transform is an immediate consequence of Proposition~\ref{prop: inclusion transformada de Fourier} (see also Remark~\ref{rmk: dual de la transf}), since
$$
p_\infty=\pinfmr=\pinfMr <\infty.
$$
 The same implication $(ii)\Rightarrow (iii)$ in Theorem~\ref{thm: necessary and sufficient thm} shows that $q_\infty\geq 2$ in item $(ii)$, while the boundedness of $\Fourier :L^{q_\infty'}(\mathbb R)\to L^{p'(\cdot)}(\mathbb R)$ follows from the second part of Proposition~\ref{prop: inclusion transformada de Fourier}.
Finally, let us prove that $\Fourier :L^{p(\cdot)}(\mathbb R)\to L^{q_\infty}(\mathbb R)$ is bounded. By hypothesis, given $f\in L^{p(\cdot)}(\mathbb R)$ we know that $\hat f$ is a measurable function. By \cite[Proposition~2.79]{bo:cruz-uribe_fiorenza:variable_lebesgue_spaces} it suffices to prove that there exists a constant $K>0$ such that
$$|\langle \hat f, g\rangle| \leq K \|f\|_{L^{p(\cdot)}(\rr)}\| g\|_{L^{q_\infty'}(\mathbb R)}$$
for all $g\in L^{q'_\infty}(\rr)$. Since $q'_\infty\leq 2$, it is enough to verify this for $g\in \Schwartz(\rr)$. Now, by \eqref{eq:duality} and the boundedness of $\Fourier :L^{q_\infty'}(\mathbb R)\to L^{p'(\cdot)}(\mathbb R)$ we have
\begin{eqnarray*}
    |\langle \hat f, g\rangle| =|\langle  f, \hat g\rangle| \leq C_1 \| f\|_{L^{p(\cdot)}(\rr)} \|\hat g\|_{L^{p'(\cdot)}(\rr)}
    \leq  C_2\| f\|_{L^{p(\cdot)}(\mathbb R)} \| g\|_{L^{q_\infty'}(\mathbb R) }
\end{eqnarray*}
for all $g\in \Schwartz(\rr)$. This completes the proof.
\end{proof}

It is worth noting that the conclusions of the previous corollary remain valid if we only assume that the exponents $p(\cdot)$ and $q(\cdot)$ admit a one-sided limit at infinity (either at $+\infty$ or at $-\infty$). The proof is unchanged. We will return to this observation in Remark~\ref{rem: one sided limits}.

\subsection{Optimal relations between the exponents}

We dedicate this subsection to showing that, if the Fourier transform $\Fourier : \Lspace{p(\cdot)} \to \Lspace{q(\cdot)}$ is bounded, then the relations between the exponents found in item $(iii)$ of Theorem \ref{thm: necessary and sufficient thm} are, in some sense, optimal. That is, there are no inequalities relating 
$p_+$, $\pinfMl$, $\pinfMr$, $q_-$, $\qinfml$ or $\qinfmr$, other than the trivial ones. The underlying reason for this is that the boundedness of the Fourier transform is stable under some perturbations of the exponents on sets of finite measure (see Lemma~\ref{lemma:modificar exponentes en medida finita} below).

The relations between the exponents can be summarized in the following diagram. We write $a \lra b$ to indicate that $a \le b$. A dashed arrow between two nodes indicates that neither of the corresponding inequalities holds in general. We write $\qinfmstar$ (respectively, $\pinfMstar$) to denote both $\qinfml$ and $\qinfmr$ (respectively, $\pinfMl$ and $\pinfMr$).
\[
\xymatrixrowsep{3pc}\xymatrixcolsep{4pc}\xymatrix{
	q_- \ar[r] \ar@{<-->}[d] \ar@{<-->}[dr] & \qinfmstar \ar[r] \ar@{<-->}[d] \ar@{<-->}[dl] & \qinfMstar \ar@{<->}[r] \ar@{<->}[d] & q_+ \ar@{<->}[d] \\
	(p_+)' \ar[r] & (\pinfMstar)' \ar[r] & (\pinfmstar)' \ar@{<->}[r] & (p_-)'
}
\]
All that remains is to prove the dashed arrows, since the other ones are either trivial or included in Theorem \ref{thm: necessary and sufficient thm}. To this end, we will use Corollary \ref{cor:p- and LHinfty imply F bounded}, together with the following lemma, which is a simple consequence of the embeddings between variable Lebesgue spaces.

\begin{lemma}\label{lemma:modificar exponentes en medida finita}
Let $p(\cdot), q(\cdot)\in \mathcal{P}(\mathbb{R})$ and let $T\colon L^{p(\cdot)}(\mathbb{R})\to L^{q(\cdot)}(\mathbb{R})$ be a bounded linear operator. Suppose $\Omega\subset \mathbb{R}$ is a measurable set with $|\Omega|<\infty$ and let $r(\cdot),s(\cdot)\in \mathcal{P}(\mathbb{R})$ be such that $r(x)\geq p(x)$, $s(x) \le q(x)$ for every $x\in \Omega$ and $r(x)=p(x)$, $s(x) = q(x)$ for every $x\in \mathbb{R}\setminus \Omega$. Then, $T\colon L^{r(\cdot)}(\mathbb{R})\to L^{s(\cdot)}(\mathbb{R})$ is a bounded operator.
\end{lemma}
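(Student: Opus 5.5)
The plan is to show the two continuous embeddings
$$
L^{r(\cdot)}(\rr)\hookrightarrow L^{p(\cdot)}(\rr)\qquad\text{and}\qquad L^{q(\cdot)}(\rr)\hookrightarrow L^{s(\cdot)}(\rr).
$$
Once these are available, $T$ is obtained as the composition $L^{r(\cdot)}\hookrightarrow L^{p(\cdot)}\xrightarrow{T} L^{q(\cdot)}\hookrightarrow L^{s(\cdot)}$, and it is therefore bounded. By symmetry, both embeddings are instances of one statement: if $a(\cdot)\ge b(\cdot)$ on $\Omega$, $a=b$ off $\Omega$, and $|\Omega|<\infty$, then $L^{a(\cdot)}(\rr)\hookrightarrow L^{b(\cdot)}(\rr)$. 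For the first embedding I take $a=r$ and $b=p$; for the second, $a=q$ and $b=s$.

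To prove the general statement, I split $f=f\chi_\Omega+f\chi_{\rr\setminus\Omega}$.

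\textbf{Off $\Omega$.} The exponents coincide there, so the modulars agree. Hence $\|f\chi_{\rr\setminus\Omega}\|_{b(\cdot)}=\|f\chi_{\rr\setminus\Omega}\|_{a(\cdot)}\le\|f\|_{a(\cdot)}$.

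\textbf{On $\Omega$.} Here I use the standard embedding for sets of finite measure, \cite[Theorem~3.3.1]{bo:diening_harjulehto_hasto_ruzicka:Lebesgue_and_Sobolev_spaces_with_variable_exponents}. It states that $L^{a(\cdot)}(\Omega)\hookrightarrow L^{b(\cdot)}(\Omega)$ whenever $b\le a$ a.e. on $\Omega$ and $1\in L^{t(\cdot)}(\Omega)$, where $1/t=1/b-1/a$. Since $|\Omega|<\infty$ and $1/t\in[0,1]$, the function $1$ belongs to $L^{t(\cdot)}(\Omega)$. Indeed, the modular of $\chi_\Omega/\lambda$ with $\lambda\ge1$ is at most $|\Omega|+1$, counting both the integral part and the $L^\infty$ part on $\{t=\infty\}$, so the norm is finite. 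This gives $\|f\chi_\Omega\|_{b(\cdot)}\le C\|f\chi_\Omega\|_{a(\cdot)}\le C\|f\|_{a(\cdot)}$, with $C\lesssim 1+\|1\|_{L^{t(\cdot)}(\Omega)}$.

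The triangle inequality then gives $\|f\|_{b(\cdot)}\le (1+C)\|f\|_{a(\cdot)}$.

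The only delicate point is exponents that take the value $+\infty$, for example $a=\infty$ on part of $\Omega$. The cited embedding theorem is stated for general $\mathcal P$ exponents, so it covers this case. If one prefers a hands-on argument, Hölder's inequality for variable exponents, $\|fg\|_{b}\lesssim\|f\|_a\|g\|_t$ applied with $g=\chi_\Omega$, gives the same bound, and the pieces where $a=\infty$ are handled directly using $|\Omega|<\infty$. I expect no real obstacle beyond checking that $\|1\|_{L^{t(\cdot)}(\Omega)}<\infty$, which is immediate from the modular computation above.
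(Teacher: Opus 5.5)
Your proposal is correct and is essentially the paper's argument: the paper also factors $T$ as $L^{r(\cdot)}(\rr)\hookrightarrow L^{p(\cdot)}(\rr)\xrightarrow{T}L^{q(\cdot)}(\rr)\hookrightarrow L^{s(\cdot)}(\rr)$, taking both embeddings from \cite[Theorem~3.3.1]{bo:diening_harjulehto_hasto_ruzicka:Lebesgue_and_Sobolev_spaces_with_variable_exponents}. The only difference is cosmetic: the paper applies that theorem directly on all of $\rr$, where the defect exponent is $+\infty$ off $\Omega$ and hence $1$ belongs to the corresponding space because $|\Omega|<\infty$, whereas you split $f$ on and off $\Omega$ and check $\|1\|_{L^{t(\cdot)}(\Omega)}<\infty$ by hand, which is also fine.
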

\begin{proof}
It follows by noticing that, under these hypotheses, we have the embeddings $\Lspace{r(\cdot)} \hookrightarrow \Lspace{p(\cdot)}$ and $\Lspace{q(\cdot)} \hookrightarrow \Lspace{s(\cdot)}$ (see \cite[Theorem~3.3.1]{bo:diening_harjulehto_hasto_ruzicka:Lebesgue_and_Sobolev_spaces_with_variable_exponents}).
\end{proof}

We now present the examples corresponding to the dashed arrows in the previous diagram.  

\begin{enumerate}
    \item Let $I_n := (n,n+2^{-n})$ and define
	\[
	\Omega := \bigcup_{n=1}^\infty I_n.
	\]
	Notice that $\abs{\Omega} < \infty$ and consider
	\[
	r(x) := \left( 1 - \frac{1/2}{\ln(e + \abs{x})} \right)^{-1}.
	\]
	Then $r_- = \lim_{\abs{x} \to +\infty} r(x)=1$ and $1/r(\cdot) \in \LH_\infty$. Hence, by Corollary \ref{cor:p- and LHinfty imply F bounded}, we know that $\Fourier : L^{r(\cdot)}(\rr) \to L^{r'(\cdot)}(\rr)$ is bounded.
	Now, by Lemma~\ref{lemma:modificar exponentes en medida finita}, letting
	\begin{equation*}
	p(x) :=
	\begin{cases}
		2 &\text{if } x \in \Omega \\
		r(x) &\text{if } x \not\in \Omega
	\end{cases}
    \quad 
    \text{and}
    \quad
    q(x) :=
	\begin{cases}
		\frac{3}{2} &\text{if } x \in \Omega \\
		r'(x) &\text{if } x \not\in \Omega,
	\end{cases}
	\end{equation*}
	we have that $\Fourier : L^{p(\cdot)}(\rr) \lra L^{q(\cdot)}(\rr)$ is bounded. Since
   $$
   q_-=\qinfmr=\frac{3}{2}\qquad \text{and}\qquad (p_+)'=(\pinfMr)'=2,
   $$  
this is an example for which 
$$
\qinfmr < (\pinfMr)', \quad q_-<(\pinfMr)', \quad \qinfmr<(p_+)'\quad \text{and}\quad q_-<(p_+)'.
$$
Analogously, we can construct examples for which the same inequalities hold, with either (or both) $\qinfmr$ and $(\pinfMr)'$ replaced by $\qinfml$ and $(\pinfMl)'$. 

\medskip

\item Take $\Omega$, $r(\cdot)$ and $q(\cdot)$ as in (i), and
    \[
	p(x) :=
	\begin{cases}
		4 &\text{if } x \in \Omega \\
		r(x) &\text{if } x \not\in \Omega.
	\end{cases}
	\]
    Then $\Fourier : L^{p(\cdot)}(\rr) \to L^{q(\cdot)}(\rr)$ is bounded in virtue of Lemma~\ref{lemma:modificar exponentes en medida finita}. Since
    $$
    q_-=\qinfmr=\frac{3}{2}\qquad \text{and}\qquad (p_+)'=(\pinfMr)'=\frac{4}{3},
    $$
    this is an example for which
    $$
    (\pinfMr)'<\qinfmr, \quad (\pinfMr)'<q_-, \quad (p_+)'<\qinfmr\quad \text{and}\quad (p_+)'<q_-.
    $$
As mentioned in (i), we can construct examples for which the same inequalities hold, with either (or both) $\qinfmr$ and $(\pinfMr)'$ replaced by $\qinfml$ and $(\pinfMl)'$.
\end{enumerate}

\section{The case $p(x)\to 1$ and $q(x)\to \infty$}\label{Seccion caracteristicas}

In view of Theorem \ref{thm: necessary and sufficient thm}, it is natural to ask whether any of its converses might hold. In this short section, we show that if $p(\cdot), q(\cdot)\in \mathcal{P}(\rr)$ satisfy
$$
\lim_{|x|\to +\infty} p(x)=1\qquad \text{and}\qquad \lim_{|x|\to +\infty}q(x)=+\infty,
$$
then conditions $(i)$ and $(ii)$ of Theorem~\ref{thm: necessary and sufficient thm} are equivalent. To prove this result, stated in Theorem~\ref{Thm: equivalencia q_+=infinito}, we follow ideas of Benedetto and Heinig in \cite[Theorem 2]{ar:benedetto_heinig:1983:weighted_Hardy_spaces_and_the_Laplace_transform}, concerning the behavior of $\|\chi_{[-t,t]}\|_{L^{q(\cdot)}(\rr)}$.

\begin{lemma}\label{lemma: condicion necesaria caracteristicas version limite}
Let $p(\cdot), q(\cdot)\in \mathcal{P}(\rr)$ be such that $\Fourier : L^{p(\cdot)}(\rr)\to L^{q(\cdot)}(\rr)$ is bounded and
$
\lim_{|x| \to +\infty} p(x) = 1.
$
Then,
\[
\sup_{t>0}\|\chi_{[-t,t]}\|_{L^{q(\cdot)}(\rr)} < +\infty.
\]
\end{lemma}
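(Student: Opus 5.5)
By Corollary~\ref{cor:Fourier bounded from p_inf to p var}(i), since $p(x)\to 1$ as $|x|\to+\infty$, we know that $p_\infty=1$ and that $\Fourier\colon L^{1}(\rr)\to L^{q(\cdot)}(\rr)$ is bounded. The plan is to test this inequality on an approximate identity, whose Fourier transform is close to $1$ on large intervals. The $L^1$ norms of such functions stay bounded, so their transforms will dominate $\chi_{[-t,t]}$ up to a constant, which bounds $\|\chi_{[-t,t]}\|_{L^{q(\cdot)}}$ uniformly in $t$.

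Concretely, I would fix a Schwartz function $\phi\ge 0$ with $\int\phi=1$ whose transform is nonnegative and bounded below on $[-1,1]$. The Gaussian $\phi(x)=e^{-\pi x^2}$ works, since $\hat\phi(\xi)=e^{-\pi\xi^2}\ge e^{-\pi}$ for $|\xi|\le 1$. For $t>0$, set $\phi_t(x):=t\,\phi(tx)$. Then $\|\phi_t\|_{1}=1$ and $\hat{\phi_t}(\xi)=\hat\phi(\xi/t)$, so
\[
\hat{\phi_t}(\xi)\ge e^{-\pi}\chi_{[-t,t]}(\xi)\qquad\text{for all } \xi\in\rr .
\]
Because the Luxemburg norm is a lattice norm ($|g|\le|h|$ implies $\|g\|_{q(\cdot)}\le\|h\|_{q(\cdot)}$), we then obtain
\[
e^{-\pi}\|\chi_{[-t,t]}\|_{L^{q(\cdot)}(\rr)}\le \|\hat{\phi_t}\|_{L^{q(\cdot)}(\rr)}\le C\|\phi_t\|_{L^1(\rr)}=C.
\]
Since $C$ does not depend on $t$, the supremum over $t>0$ is finite.

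The only delicate point is the transfer of boundedness from $L^{p(\cdot)}$ to $L^{1}$, and this is handled by Corollary~\ref{cor:Fourier bounded from p_inf to p var}, which rests on the modulation and translation argument of Proposition~\ref{prop: inclusion transformada de Fourier}. If one prefers to avoid the full $L^1$ space, it is enough that the inequality $\|\hat f\|_{q(\cdot)}\le C\|f\|_{1}$ holds for $f\in C_c^\infty(\rr)$. The proposition gives exactly this, since $\pinfmr=\pinfMr=1$. One can then replace $\phi$ by a compactly supported smooth nonnegative bump with $\int\phi=1$. Its transform is continuous and equals $1$ at the origin, so $\operatorname{Re}\hat\phi\ge 1/2$ on some interval $[-\delta,\delta]$, and the dilation $\phi_{t/\delta}$ gives the same estimate with $1/2$ in place of $e^{-\pi}$. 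I do not expect any other obstacle.
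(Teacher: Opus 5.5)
Your proof is correct and is essentially the paper's argument. Both get $L^1\to L^{q(\cdot)}$ boundedness from Corollary~\ref{cor:Fourier bounded from p_inf to p var}(i) and test it on $L^1$-normalized dilates whose transforms are bounded below on intervals of arbitrary length; the only difference is that the paper uses the box $\chi_{[-t,t]}$ (whose transform is at least $t$ on $[-\frac{1}{2\pi t},\frac{1}{2\pi t}]$ while its $L^1$ norm is $2t$), whereas you use Gaussian or smooth bump dilates.
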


\begin{proof}
Let us consider $f:=\chi_{[-t,t]}$. On the one hand, by the hypotheses and Corollary~\ref{cor:Fourier bounded from p_inf to p var} we know that 
\begin{equation}\label{eq: norma de la transformada de la caracteristica}
    \|\hat f\|_{L^{q(\cdot)}(\rr)}\lesssim \|\chi_{[-t,t]}\|_{L^1(\rr)}=2t.
\end{equation} 
On the other hand, we have
\[
\hat{f}(\xi) = \int_\rr f(x) e^{-2\pi i x\xi} \ dx = 2 \int_{0}^t \cos(2\pi x\xi) \ dx.
\]
Thus, for any fixed $t>0$ we obtain 
\[
\|\hat f\|_{L^{q(\cdot)}(\rr)}\geq
2 \left\| \chi_{[-\frac{1}{2\pi t},\frac{1}{2\pi t}]}(\cdot) \int_0^t \cos(2\pi x\,\cdot) \ dx \right\|_{\Lspace{q(\cdot)}} \ge t \norm{\chi_{[-\frac{1}{2\pi t},\frac{1}{2\pi t}]}}_{\Lspace{q(\cdot)}},
\]
since $\cos(2\pi x\xi) \ge \frac{1}{2}$ whenever $\xi\in \left[-\frac{1}{2\pi t}, \frac{1}{2\pi t}\right]$ and $x\in [0,t]$.
Combining this estimation with \eqref{eq: norma de la transformada de la caracteristica} we deduce that 
$$
\sup_{t>0} \|\chi_{[-t,t]}\|_{L^{q(\cdot)}(\rr)}<\infty,
$$
which is the desired statement.
\end{proof}

\begin{proof}[Proof of Theorem~\ref{Thm: equivalencia q_+=infinito}]
If $1\in L^{p'(\cdot)}(\rr)\cap L^{q(\cdot)}(\rr)$, then the boundedness of $\Fourier\colon L^{p(\cdot)}(\rr)\to L^{q(\cdot)}(\rr)$ follows from the implication $(i)\Rightarrow (ii)$ in Theorem~\ref{thm: necessary and sufficient thm}. Just note that, under the hypotheses on the exponents $p(\cdot)$ and $q(\cdot)$, we have $r(\cdot)=p'(\cdot)$ and $s(\cdot)=q(\cdot)$.
Conversely, assume that $\Fourier\colon L^{p(\cdot)}(\rr)\to L^{q(\cdot)}(\rr)$ is bounded.
On the one hand, by Lemma~\ref{lemma: condicion necesaria caracteristicas version limite} we know that
\[
\sup_{n\in \mathbb{N}}\|\chi_{[-n,n]}\|_{L^{q(\cdot)}(\rr)}<\infty.
\]
Then, since $(\chi_{[-n,n]})_{n\in \mathbb{N}}$ increases pointwise to 1, we deduce from \cite[Theorem 2.59]{bo:cruz-uribe_fiorenza:variable_lebesgue_spaces} that $1\in L^{q(\cdot)}(\rr)$. On the other hand, applying Corollary~\ref{cor:Fourier bounded from p_inf to p var}, we obtain that  
$\Fourier:L^{1}(\rr)\to L^{p'(\cdot)}(\rr)$ is bounded. Hence, Lemma~\ref{lemma: condicion necesaria caracteristicas version limite} yields
\[
\sup_{n\in \mathbb{N}}\|\chi_{[-n,n]}\|_{L^{p'(\cdot)}(\rr)}<\infty,
\]
and the previous argument, applied to $p'(\cdot)$ instead of $q(\cdot)$, shows that $1 \in \Lspace{p'(\cdot)}$.
\end{proof}

\begin{remark}\label{rem: one sided limits}
   A closer inspection of the proofs of Corollary~\ref{cor:Fourier bounded from p_inf to p var} and Lemma~\ref{lemma: condicion necesaria caracteristicas version limite} reveals that it suffices to assume that the exponents $p(\cdot)$ and $q(\cdot)$ admit either a limit at $+\infty$ or a limit at $-\infty$ to obtain the same conclusions. Consequently, the equivalence in Theorem~\ref{Thm: equivalencia q_+=infinito} still holds under the weaker assumption that the exponents admit a one-sided limit.
\end{remark}

\section{On the sets where $p(x)>2$ and $q(x)<2$}\label{Seccion lagunares}

In the classical Hausdorff--Young inequality, the condition $p\le 2$ is essential for the boundedness of the Fourier transform. In the variable exponent setting, however, the exponent $p(\cdot)$ may exceed $2$ on certain subsets of $\mathbb{R}$. It is therefore natural to ask how large such sets can be if the Fourier transform is bounded from $L^{p(\cdot)}(\mathbb{R})$ into $L^{q(\cdot)}(\mathbb{R})$. 

We already have a first answer to this question. In view of Corollary \ref{cor:p- and LHinfty imply F bounded}, it is possible to have $p(x) > 2$ for all $x\in \rr$ provided that $p(\cdot)$ converges sufficiently fast to $p_- = 2$. Thus, the next natural question is how large the set $\{x\in \rr : p(x) > 2+\varepsilon\}$ can be. Correspondingly, the dual question is how large the set $\{x\in \rr : q(x) < 2-\varepsilon\}$ can be.
In this section, we obtain restrictions on the structure of these sets. More precisely, we show that if the set $\{x \in \rr: p(x) > 2+\varepsilon\}$ is contained, up to a set of finite measure, in a $\sigma$-elementary set (i.e., a countable disjoint union of intervals), then the lengths of the corresponding intervals must necessarily converge to zero. A similar result holds for $\{x\in \rr : q(x) < 2-\varepsilon\}$.

In the following, we make use of lacunary sequences. Recall that a sequence of positive integers $\Lambda=\{\lambda_j\}_{j=1}^\infty$ is called \emph{lacunary} if there exists a constant $A>1$ such that $\lambda_{j+1}\ge A\lambda_j$ for every $j\in \mathbb{N}$. Fourier series whose frequencies belong to such sequences enjoy remarkable properties; in particular, their $L^p$ norms are all comparable. More precisely, if $f$ is a function on the one-dimensional torus $\mathbb{T}$ (i.e., $[0,1]$ with the endpoints identified) whose Fourier coefficients vanish outside a lacunary set $\Lambda$, then for every $1\le p<\infty$ the norms $\|f\|_{L^p[0,1]}$ are equivalent up to constants depending only on $p$ and the lacunarity constant $A$. In what follows we will make use of this fact, whose precise statement can be found in \cite[Theorem~3.6.4]{bo:grafakos:classical_fourier_analysis}.

\begin{theorem}\label{thm:lagunares}
Let $\varepsilon>0$ and let $p(\cdot), q(\cdot)\in \mathcal{P}(\rr)$ be such that $\Fourier:L^{p(\cdot)}(\mathbb{R}) \to L^{q(\cdot)}(\mathbb{R})$ is bounded. 
\begin{enumerate}
    \item If $G$ is the disjoint union of a sequence of intervals $\{I_j\}_{j\in \N}$, 
    \[
 \{x\in\rr : p(x) > 2 + \varepsilon\}\subseteq G\quad \text{ and }\quad |G \setminus \{x\in\rr : p(x) > 2 + \varepsilon\}| < +\infty,
\]
then $|I_j|\rightarrow 0$ as $j \to +\infty$.
\item If $U$ is the disjoint union of a sequence of intervals $\{J_j\}_{j\in \N}$,
\[
 \{x\in\rr : q(x) < 2 - \varepsilon\}\subseteq U \quad \text{ and }\quad|U \setminus \{x\in\rr : q(x) < 2 - \varepsilon\}| < +\infty,
\]
then $|J_j|\rightarrow 0$ as $j \to +\infty$.
\end{enumerate}
\end{theorem}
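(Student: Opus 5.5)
We argue by contradiction, following the heuristic that on a set where $p(x)>2+\varepsilon$ the domain norm behaves like an $\ell^{2+\varepsilon}$-sum over disjoint pieces. Lacunary exponentials, by contrast, force the Fourier side to behave like an $\ell^2$-sum. For $(i)$, assume there are $\delta>0$ and infinitely many $j$ with $|I_j|\ge\delta$. Since $|G\setminus\{p>2+\varepsilon\}|<\infty$, the part of $I_j$ where $p\le 2+\varepsilon$ has measure tending to $0$ along these $j$. After rescaling the variable (which only replaces $\delta$ by $1$ and changes constants), we pass to a subsequence and pick, in each chosen $I_{j_k}$, a point $c_k\in\mathbb{Z}$ such that $[c_k,c_k+\tfrac12]\subseteq I_{j_k}$ and $\{c_k\}$ is lacunary, say $c_{k+1}\ge 2c_k$. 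This is possible because the intervals escape to infinity and have length $\ge1$.

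Fix a nonnegative bump $\psi\in C_c^\infty(0,\tfrac12)$ with $0\le\psi\le1$ and $\hat\psi(0)\neq0$, and for coefficients $a=(a_k)_{k\le N}$ with $|a_k|\le1$ set
$$
f_N(x):=\sum_{k=1}^N a_k\,\psi(x-c_k).
$$
The supports are disjoint and $|f_N|\le1$. Hence, on the set $\{p>2+\varepsilon\}$,
$$
\rho_{p(\cdot)}(f_N)\le \sum_k |a_k|^{2+\varepsilon}\|\psi\|_{2+\varepsilon}^{2+\varepsilon}+\sum_k \int_{B_k}|a_k\psi(x-c_k)|^{p(x)}\,dx,
$$
where $B_k:=[c_k,c_k+\tfrac12]\cap\{p\le 2+\varepsilon\}$ is the bad set. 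We make the bad-set contribution harmless by passing to a further subsequence with $\sum_k|B_k|<1$, so it is at most $1$. This gives $\|f_N\|_{p(\cdot)}\lesssim \max\{1,\|a\|_{\ell^{2+\varepsilon}}\}$, uniformly in $N$ (normalize $a$ accordingly).

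On the Fourier side, $\hat f_N(\xi)=\hat\psi(\xi)\,P_N(\xi)$ with $P_N(\xi)=\sum_k a_k e^{-2\pi i c_k\xi}$, a $1$-periodic lacunary trigonometric polynomial. Restrict to a small interval $J\subseteq[0,1]$ around $0$ where $|\hat\psi|\ge c>0$. On the bounded set $J$ we have $L^{q(\cdot)}(J)\hookrightarrow L^1(J)$ with a constant depending only on $|J|$. Therefore
$$
\|\hat f_N\|_{q(\cdot)}\gtrsim \int_J|P_N|\,d\xi.
$$
The key step, and the main obstacle, is to bound this from below by $\|a\|_{\ell^2}$. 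The lacunary estimate \cite[Theorem~3.6.4]{bo:grafakos:classical_fourier_analysis} gives $\|P_N\|_{L^1[0,1]}\approx\|a\|_{\ell^2}$ on the whole torus, not on a subinterval $J$. To localize it, I would first try to enlarge $J$ to all of $[0,1]$ by choosing $\psi$ with $\hat\psi\neq0$ on $[0,1]$; this is possible, e.g.\ $\psi$ a narrow nonnegative bump, since then $\hat\psi(\xi)\approx\hat\psi(0)$ for $|\xi|\le1$. If that fails, I would use Zygmund's local version of the lacunary inequality, valid on any interval once $k$ is large, after discarding finitely many terms. Granting this, boundedness gives $\|a\|_{\ell^2}\lesssim\max\{1,\|a\|_{\ell^{2+\varepsilon}}\}$ for all finite $a$ with $|a_k|\le1$. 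Taking $a_k=k^{-1/2}$ gives $\|a\|_{\ell^2}\to\infty$ while $\|a\|_{\ell^{2+\varepsilon}}$ stays bounded, a contradiction.

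For $(ii)$ I would argue by duality. By the proof of Proposition~\ref{prop: inclusion transformada de Fourier} (see Remark~\ref{rmk: dual de la transf}), $\|\hat f\|_{p'(\cdot)}\lesssim\|f\|_{q'(\cdot)}$ holds for $f\in C_c^\infty(\rr)$. Moreover, $q(x)<2-\varepsilon$ exactly when $q'(x)>2+\varepsilon'$ for some $\varepsilon'>0$, so $U$ plays the role of $G$ for the exponent $q'(\cdot)$. The functions $f_N$ above are in $C_c^\infty(\rr)$, so the same construction applies verbatim with $p(\cdot)$ replaced by $q'(\cdot)$ and $q(\cdot)$ by $p'(\cdot)$, yielding $|J_j|\to0$.
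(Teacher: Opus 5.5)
Your proposal is correct and follows the paper's argument. You place lacunary translates of a single bump inside the long intervals, bound the domain side by an $\ell^{2+\varepsilon}$-type modular estimate (the $L^\infty$ term on $\{p=+\infty\}$, which your bound omits, is harmless because $|f_N|\le 1$), bound the Fourier side from below by the $L^1$--$L^2$ equivalence for lacunary series over a full period, and obtain $(ii)$ from the duality in Remark~\ref{rmk: dual de la transf}. The differences are only technical: you absorb the finite-measure exceptional set with a subsequence satisfying $\sum_k|B_k|<1$ rather than Lemma~\ref{lemma:modificar exponentes en medida finita}, you take $a_k=k^{-1/2}$ rather than comparing $m^{1/2}$ with $m^{1/(2+\varepsilon)}$, and your narrow nonnegative bump already gives $|\hat\psi|\ge c>0$ on $[0,1]$, playing the role of the paper's $\varphi$ with $\hat\varphi>0$ on $[0,M]$ (so the Zygmund fallback is unnecessary).
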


\begin{proof}

	Let us prove $(i)$ by contradiction. Suppose that there exists a set $G$, which is the disjoint union of intervals $\{I_j\}_{j\in \N}$, such that 
    \[
 \{x\in\rr : p(x) > 2 + \varepsilon\}\subseteq G,\quad |G \setminus \{x\in\rr : p(x) > 2 + \varepsilon\}| < +\infty \quad \text{ and }\quad |I_{j}|\not\to 0.
\]
Then, there exist $r > 0$, a sequence $\{\lambda_j\}_j$ of positive integers and a non-zero integer $M$ such that
	\[
	E:=\bigcup_{j=1}^\infty\left(\frac{\lambda_j}{M} - r, \frac{\lambda_j}{M} + r\right) \subseteq G,
	\]
    where the intervals are pairwise disjoint.
    By selecting a suitable subcollection of the intervals, we may assume that $\{\lambda_j\}_j$ form a lacunary sequence, which will be useful later. Since $G$ coincides with $\{x\in\rr : p(x) > 2 + \varepsilon\}$ up to a set of finite measure, by modifying $p(\cdot)$ accordingly (using Lemma \ref{lemma:modificar exponentes en medida finita}), we may further suppose that
	$$E \subseteq \{x\in\rr : p(x) > 2 + \varepsilon\}.$$ 
Now, assuming \( M>0 \) (the case \( M<0 \) is entirely analogous), we have the following chain of continuous operators:
\begin{equation}\label{eq: chain}
L^{p(\cdot)}(E) \hookrightarrow L^{p(\cdot)}(\mathbb{R})
\xrightarrow{\Fourier} L^{q(\cdot)}(\mathbb{R})
\xrightarrow{\text{restr.}} L^{q(\cdot)}[0,M]
\hookrightarrow L^{1}[0,M].
\end{equation}
Our goal is to construct, for any sufficiently large $m\in \mathbb{N}$, 
a function $f_m$ satisfying
	\begin{equation}\label{eq: ineq fm}
	C\, m^{1/2} \le \|\hat{f}_m\|_{L^1[0,M]}
	\qquad \text{and} \qquad
	\|f_m\|_{L^{p(\cdot)}(E)} \le D\, m^{1/(2+\varepsilon)},
	\end{equation}
	where $C, D > 0$ are constants independent of $m$. 
	This, together with \eqref{eq: chain}, will yield the desired contradiction.

    Let $\varphi$ be a continuous function such that 
	$\operatorname{supp} \varphi \subseteq (-r, r)$, $\widehat{\varphi}$ is continuous and 
	$\widehat{\varphi}(x) > 0$ for all $x \in \mathbb{R}$\footnote{Such a function can be defined using
		\[
		\phi(x) := \chi_{[-1,1]} * \chi_{[-1,1]}(x) + e^{2\pi i \frac{x}{4}} \, \chi_{[-1,1]} * \chi_{[-1,1]}(x).
		\]
		
		Indeed, 
		\[
		\hat{\phi}(x) = \left( \frac{\sin(2\pi x)}{\pi x} \right)^2 + \left( \frac{\sin\big(2\pi(x+\frac{1}{4})\big)}{\pi (x+\frac{1}{4})} \right)^2= \left( \frac{\sin(2\pi x)}{\pi x} \right)^2 + \left( \frac{\cos(2\pi x)}{\pi (x+\frac{1}{4})} \right)^2. 
		\]}. For each  $m \in \mathbb{N}$ define
	\[
	f_m(x) = \sum_{j=1}^m \varphi\left(x - \frac{\lambda_j}{M}\right)\qquad \text{for $x\in E$}.
	\]
    Then, on the one hand, we have
	$$
	\hat{f}_m(\xi) = \sum_{j=1}^m e^{-2\pi i \frac{\lambda_j}{M} \xi}\, \widehat{\varphi}(\xi)
	$$
	and, consequently,
	\[
	\|\hat{f}_m\|_{L^1[0, M]} \ge 
	\min \{|\widehat{\varphi}(\xi)|: {0 \le \xi \le M} \} \,
	\bigg\| \sum_{j=1}^m e^{-2\pi i \frac{\lambda_j}{M} (\cdot)} \bigg\|_{L^1[0,M]}.
	\] 
Since $\{\lambda_j\}_j$ is a lacunary sequence, it follows from 
\cite[Theorem 3.6.4]{bo:grafakos:classical_fourier_analysis}
that there exists a constant $C>0$ such that
\[
\left\| \sum_{j=1}^m e^{2\pi i \frac{\lambda_j}{M} (\cdot)} \right\|_{L^1[0,M]}= |M| \left\| \sum_{j=1}^m e^{2\pi i \lambda_j (\cdot)} \right\|_{L^1[0,1]}
\ge C \left\| \sum_{j=1}^m e^{2\pi i \lambda_j (\cdot)} \right\|_{L^2[0,1]} = C m^{1/2}.
\]
This gives the first inequality in \eqref{eq: ineq fm}.

On the other hand, let us show that
\begin{equation}\label{eq: modular fm}
\rho_{p(\cdot)}\!\left(\frac{f_m}{(2m)^{\frac{1}{2+\varepsilon}}\|\varphi\|_{L^{2+\varepsilon}(E)}}\right) < 1
\end{equation}
for sufficiently large $m\in \mathbb{N}$, which yields the second inequality in \eqref{eq: ineq fm}.
Since we are interested in sufficiently large $m$, we may assume that
\begin{equation}\label{eq: lagunares m grande}
(2m)^{\frac{1}{2+\varepsilon}}\|\varphi\|_{L^{2+\varepsilon}(E)} > 2 \|\varphi\|_{L^\infty(E)}.
\end{equation}
Now,
\[
\rho_{p(\cdot)}\!\left(\frac{f_m}{(2m)^{\frac{1}{2+\varepsilon}}\|\varphi\|_{L^{2+\varepsilon}(E)}}\right)
=
\int_{E\setminus E_\infty}
\left|
\sum_{j=1}^m
\frac{\varphi\left(x-\frac{\lambda_j}{M}\right)}
{(2m)^{\frac{1}{2+\varepsilon}}\|\varphi\|_{L^{2+\varepsilon}(E)}}
\right|^{p(x)}
\,dx
+
\frac{\|f_m\|_{L^\infty(E_\infty)}}
{(2m)^{\frac{1}{2+\varepsilon}}\|\varphi\|_{L^{2+\varepsilon}(E)}},
\]
where $E_\infty := \{x\in E:\,\, p(x)=+\infty\}$. Noting that $\|f_m\|_{L^\infty(E_\infty)}=\|\varphi\|_{L^{\infty}(E)}$ (since $E$ is a union of pairwise disjoint intervals) and applying \eqref{eq: lagunares m grande}, we obtain the estimate
\[
\frac{\|f_m\|_{L^\infty(E_\infty)}}
{(2m)^{\frac{1}{2+\varepsilon}}\|\varphi\|_{L^{2+\varepsilon}(E)}}
=\frac{\|\varphi\|_{L^\infty(E)}}{{(2m)^{\frac{1}{2+\varepsilon}}\|\varphi\|_{L^{2+\varepsilon}(E)}}}<\frac{1}{2}.
\]
Recalling that $E \subset \{x\in\rr : p(x) > 2 + \varepsilon\}$ and using again \eqref{eq: lagunares m grande}, we obtain that
\begin{eqnarray*}
\int_{E\setminus E_\infty}
\left|
\sum_{j=1}^m
\frac{\varphi\left(x-\frac{\lambda_j}{M}\right)}
{(2m)^{\frac{1}{2+\varepsilon}}\|\varphi\|_{L^{2+\varepsilon}(E)}}
\right|^{p(x)}
\,dx
&\le&
\sum_{j=1}^m\int_{\frac{\lambda_j}{M}-r}^{\frac{\lambda_j}{M}+r}
\left|
\frac{\varphi\left(x-\frac{\lambda_j}{M}\right)}
{(2m)^{\frac{1}{2+\varepsilon}}\|\varphi\|_{L^{2+\varepsilon}(E)}}
\right|^{2+\varepsilon}
\,dx
\\
&=&
\frac{1}{2m\|\varphi\|_{L^{2+\varepsilon}(E)}^{2+\varepsilon}}
\sum_{j=1}^m
\int_{-r}^{r}
|\varphi\left(y\right)|^{2+\varepsilon}\,dy
\\
&=&\frac{1}{2}.
\end{eqnarray*}
Hence, inequality \eqref{eq: modular fm} holds, which completes the proof of $(i)$.

Item $(ii)$ also follows by contradiction. We only sketch the proof, since it follows along the same lines as that of $(i)$. If we assume that $|J_j|$ does not tend to zero, then, proceeding as before, eventually modifying $q(\cdot)$ on a set of finite measure, we obtain 
\[
E=\bigcup_{j=1}^\infty\left(\frac{\lambda_j}{M} - r, \frac{\lambda_j}{M} + r\right) \subseteq \{x\in\rr : q(x) < 2 - \varepsilon\}=\{x\in\rr : q'(x) > 2 + \tilde \varepsilon\},
\]
where $M$ is a non-zero integer, $\{\lambda_j\}_j$ form a lacunary sequence and $2 + \tilde \varepsilon$ is the conjugate exponent of $2 - \varepsilon$. Now, by Remark~\ref{rmk: dual de la transf} we have the following chain of continuous operators
\[
\overline{C^\infty_c(\rr)}^{L^{q'(\cdot)}}
\xrightarrow{\Fourier} L^{p'(\cdot)}(\mathbb{R})
\xrightarrow{\text{restr.}} L^{p'(\cdot)}[0,M]
\hookrightarrow L^{1}[0,M].
\]
Repeating the previous construction with $q'(\cdot)$ and $\tilde{\varepsilon}$ in place of $p(\cdot)$ and $\varepsilon$, respectively, we obtain, for all sufficiently large $m$, a function $f_m$ such that
\[
C\, m^{1/2} \le \|\hat{f}_m\|_{L^1[0,M]}
\quad \text{and} \quad
\|f_m\|_{L^{q'(\cdot)}(E)} \le D\, m^{1/(2+\tilde\varepsilon)}.
\]
A closer inspection of the construction of $f_m$ shows that it is a continuous function with compact support contained in $E$. In particular, $f_m$ can be approximated in $L^{q'(\cdot)}(\rr)$ by functions in $C^\infty_c(\rr)$ with support in $E$. Hence, $f_m \in \overline{C^\infty_c(\rr)}^{L^{q'(\cdot)}}$, which gives the desired contradiction.
\end{proof}

\begin{remark}
    Observe that the inclusion assumptions
    $$
    \{x\in\rr : p(x) > 2 + \varepsilon\}\subseteq G\qquad \text{and}\qquad \{x\in\rr : q(x) < 2 - \varepsilon\}\subseteq U
    $$
    in the previous theorem, are made only for convenience. Indeed, if the conclusion holds for every measurable superset of
\(
\{x\in\mathbb{R}: p(x)>2+\varepsilon\}
\)
whose excess has finite measure, then it also holds for any measurable set differing from it by a set of finite measure (since the latter is contained in such a superset). We have chosen the present formulation because it keeps the focus on the upper and lower level sets $\{x\in\mathbb{R}: p(x)>2+\varepsilon\}$ and $\{x\in\mathbb{R}: q(x)<2-\varepsilon\}$,
which are the primary objects of our study.

\end{remark}

\section{Open Problems}\label{problems}
The results obtained in this paper naturally lead to several open problems and directions for future research. We conclude by highlighting some of these questions.
\begin{enumerate}[label=\arabic*.]
\item \label{problem: equivalence} In light of Theorems~\ref{thm: necessary and sufficient thm} and \ref{Thm: equivalencia q_+=infinito}, it is natural to ask the following question.
Let $p(\cdot), q(\cdot) \in \mathcal{P}(\rr)$ be such that the Fourier transform $\Fourier: L^{p(\cdot)}(\rr)\to L^{q(\cdot)}(\rr)$ is bounded, and let $r(\cdot), s(\cdot)$ be the defect exponents defined by
$$
\frac{1}{r(\cdot)}=\frac{1}{p_-}-\frac{1}{p(\cdot)} \quad \text{and} \quad \frac{1}{s(\cdot)}=\frac{1}{q(\cdot)}-\frac{1}{(p_-)'}.
$$
Does $1\in L^{r(\cdot)}(\rr)\cap L^{s(\cdot)}(\rr)$?

One may also naturally ask whether the boundedness of $\Fourier: L^{p(\cdot)}(\rr)\to L^{q(\cdot)}(\rr)$ is equivalent to condition $(iii)$ of Theorem~\ref{thm: necessary and sufficient thm}. This question will be addressed in a forthcoming paper, where we prove that condition $(iii)$ is not sufficient for the boundedness of the Fourier transform. The proof relies on some auxiliary results that are developed there.

\item Related to the previous question, albeit more tractable, is the following problem.  Assume that $p(\cdot)$ is continuous, even, and non-decreasing on $(0,+\infty)$. Can one find necessary and sufficient conditions for the boundedness of $\Fourier: L^{p(\cdot)}(\rr)\to L^{p'(\cdot)}(\rr)$ in terms of the rate at which $p(x)$ converges to $p_-$ as $|x|\to+\infty$? In view of Corollary~\ref{cor:p- and LHinfty imply F bounded}, we know that $1/p(\cdot) \in \LH_\infty$ is sufficient for the boundedness of $\Fourier: L^{p(\cdot)}(\rr)\to L^{p'(\cdot)}(\rr)$ in this case. However, to the best of our knowledge, a necessary and sufficient condition is not yet known.

\item\label{problem: sets} In Theorem~\ref{thm:lagunares} we proved that if $\Fourier: L^{p(\cdot)}(\rr)\to L^{q(\cdot)}(\rr)$ is bounded, then the set $\{x:p(x)>2+\varepsilon\}$ cannot contain infinitely many intervals of a given length. The following question was our original motivation. Let $\alpha>p_-$. Does the set $$\{x\in \rr: p(x)>\alpha\}$$ have finite measure? And if $\alpha = 2+\varepsilon$? Analogous questions can be posed for the set $\{x\in \rr: q(x)<\beta\}$, where either $\beta <q_+$ or $\beta=2-\varepsilon$.

\item Throughout this work, we restrict our attention to the one-dimensional Fourier transform. It is natural to ask whether results analogous to those obtained in Theorems~\ref{thm: necessary and sufficient thm} and \ref{Thm: equivalencia q_+=infinito} can be established in the $n$-dimensional setting. We expect that, at least for sufficient conditions for the boundedness of the Fourier transform, one should impose conditions on
\[
\pinfmrtheta := \lim_{M \to +\infty} \essinf_{r \ge M} p(re^{i\theta}) \quad \text{and}\quad \qinfMrtheta := \lim_{M \to +\infty} \esssup_{r \ge M} q(re^{i\theta})
\]
for all $\theta\in [0,2\pi)$.

\end{enumerate}

\raggedbottom
\printbibliography[title={REFERENCES}]
\end{document}